\tikzstyle arrowstyle=[scale=2]
\tikzstyle mdirected=[postaction={decorate,decoration={markings,
    mark=at position.5 with {\arrow[arrowstyle]{stealth}}}}]
\tikzstyle edirected=[postaction={decorate,decoration={markings,
    mark=at position 1.0 with {\arrow[arrowstyle]{stealth}}}}]
\tikzset{bullet/.style={
shape = circle,fill = black, inner sep = 0pt, outer sep = 0pt, minimum size = 0.35em, line width = 0pt, draw=black!100}}
\newcolumntype{C}[1]{>{\centering\arraybackslash}m{#1}}
\newtheorem{thm}{Theorem}[section]
\newtheorem{lem}[thm]{Lemma}
\newtheorem{prop}[thm]{Proposition}
\theoremstyle{definition}
\numberwithin{equation}{section}
\theoremstyle{definition}
\newtheorem{definition}{Definition}[section]
\theoremstyle{definition}
\newtheorem{remark}{Remark}[section]
\theoremstyle{definition}
\newtheorem{example}{Example}[section]
\numberwithin{equation}{section}
\newcommand{\RN}[1]{%
  \textup{\uppercase\expandafter{\romannumeral#1}}%
}
\begin{document}

\title[On symplectic fillings of small Seifert $3$-manifolds]{On symplectic fillings of small Seifert $3$-manifolds}

\author{Hakho Choi}
\address{School of Mathematics, Korea Institute for Advanced Study, 85 Hoegiro, Dongdaemun-gu, Seoul 02455, Republic of Korea }
\email{hakho@kias.re.kr}

\author{Jongil Park}
\address{Department of Mathematical Sciences, Seoul National University, Seoul 08826, Republic of Korea }
\email{jipark@snu.ac.kr}

\thanks{}
\subjclass[2010]{53D05, 57R17, 32S25}%
\keywords{Rational blowdown, small Seifert 3-manifold, symplectic filling}
\date{May, 2019; revised at November, 2021 and July, 2022}

\begin{abstract}
In this paper, we investigate the minimal symplectic fillings of small Seifert 3-manifolds with a canonical contact structure. As a result, we list all minimal symplectic fillings using curve configurations for small Seifert 3-manifolds satisfying certain conditions. Furthermore, we also demonstrate that every such a minimal symplectic filling is obtained by a sequence of rational blowdowns from the minimal resolution of the corresponding weighted homogeneous complex surface singularity. 
\end{abstract}

\maketitle
\hypersetup{linkcolor=black}

\section{Introduction}

One of the fundamental problems in symplectic 4-manifold topology is to classify symplectic fillings of certain 3-manifolds equipped with a natural contact structure. Among them, researchers have long studied symplectic fillings of the link of a normal complex surface singularity. Note that the link of a normal surface singularity carries a canonical contact structure also known as the Milnor fillable contact structure.  
For example, P.~Lisca~\cite{Lis}, M.~Bhupal and K.~Ono~\cite{BOn}, and the second author et al.~\cite{PPSU} completely classified all minimal symplectic fillings of lens spaces and certain small Seifert 3-manifolds coming from the link of quotient surface singularities. L. Starkston~\cite{Sta1} also investigated minimal symplectic fillings of the link of some weighted homogeneous surface singularities.

On the one hand, topologists working on 4-manifold topology are also interested in finding a surgical interpretation for symplectic fillings of a given 3-manifold. More specifically, one may ask whether there is any surgical description of those fillings. 
In fact, it has been known that \emph{rational blowdown} surgery, introduced by R.~Fintushel and R.~Stern~\cite{FS} and generalized by the second author~\cite{Par} and A.~Stipsicz, Z.~Szab{\'o} and J.~Wahl~\cite{SSW}, is a powerful tool to answer this question. 
For example, for the link of quotient surface singularities equipped with a canonical contact structure, it was proven~\cite{BOz}, \cite{CP} that every minimal symplectic filling is obtained by a sequence of rational blowdowns from the minimal resolution of the singularity. On the other hand, L.~Starkston~\cite{Sta2} showed that there are symplectic fillings of some Seifert 3-manifolds that cannot be obtained by a sequence of rational blowdowns from the minimal resolution of the singularity. Note that Seifert 3-manifolds can be viewed as the link of weighted homogeneous surface singularities. 
Hence, it is an intriguing question as to which Seifert 3-manifolds have a rational blowdown interpretation for their minimal symplectic fillings.

 In this paper, we investigate the minimal symplectic fillings of small Seifert 3-manifolds over the 2-sphere satisfying certain conditions. By a \textit{small} Seifert (fibered) $3$-manifold, we assume that it admits at most $3$ singular fibers when it is considered as an $S^1$-fibration over the 2-sphere. 
 In general, a Seifert 3-manifold as an $S^1$-fibration over a Riemann surface can have any number of singular fibers. We denote a small Seifert $3$-manifold $Y$ by $Y(-b; (\alpha_1, \beta_1), (\alpha_2, \beta_2), (\alpha_3, \beta_3))$ whose surgery diagram is given in Figure~\ref{small} and which is also given as a boundary of a plumbing 4-manifold of disk bundles of 2-spheres according to the graph $\Gamma$ in Figure~\ref{small}.
The integers $b_{ij}\geq 2$ in Figure~\ref{small} are uniquely determined by the following continued fraction:
 $$\frac{\alpha_i}{\beta_i}=[ b_{i1}, b_{i2}, \dots, b_{ir_i} ]=b_{i1}-\displaystyle {\frac{1}{b_{i2}-\displaystyle\frac{1}{\cdots-\displaystyle\frac{1}{b_{ir_i}}}}}$$
\begin{figure}[h]
\begin{tikzpicture}[scale=0.6]
\begin{scope}
\begin{knot}[
	clip width=5,
	clip radius = 2pt,
	end tolerance = 1pt,
]
\strand (0,0) ellipse (3 and 1.5);
\strand (-1.5,-1.75) ellipse (0.25 and 0.8);
\strand (0,-1.75) ellipse (0.25 and 0.8);
\strand (1.5,-1.75) ellipse (0.25 and 0.8);
\draw (-3,1.5) node {$-b$};
\draw (-1.5,-2.55) node[below] {$-\frac{\alpha_1}{\beta_1}$};
\draw (0,-2.55) node[below] {$-\frac{\alpha_2}{\beta_2}$};
\draw (1.5,-2.55) node[below] {$-\frac{\alpha_3}{\beta_3}$};

\flipcrossings{1,4,5}
\end{knot}
\end{scope}
\begin{scope}[shift={(8,0)}]
\node[bullet] at (0,1.5){};
\node[bullet] at (0,0){};
\node[bullet] at (-2,0){};
\node[bullet] at (2,0){};
\node[bullet] at (0,-1){};
\node[bullet] at (-2,-1){};
\node[bullet] at (2,-1){};
\node[bullet] at (0,-3){};
\node[bullet] at (-2,-3){};
\node[bullet] at (2,-3){};
\draw (0,1.5)--(0,-1.5);
\draw (0,1.5)--(2,0)--(2,-1.5);
\draw (0,1.5)--(-2,0)--(-2,-1.5);
\draw[dotted](0,-1.5)--(0,-2.5);
\draw[dotted](2,-1.5)--(2,-2.5);
\draw[dotted](-2,-1.5)--(-2,-2.5);
\draw(0,-2.5)--(0,-3);
\draw(2,-2.5)--(2,-3);
\draw(-2,-2.5)--(-2,-3);
\draw (0,1.5) node[above] {$-b$};
\draw (0,0) node[left] {$-b_{21}$};
\draw (0,-1) node[left] {$-b_{22}$};
\draw (0,-3) node[left] {$-b_{2r_2}$};
\draw (-2,0) node[left] {$-b_{11}$};
\draw (-2,-1) node[left] {$-b_{12}$};
\draw (-2,-3) node[left] {$-b_{1r_1}$};
\draw (2,0) node[left] {$-b_{31}$};
\draw (2,-1) node[left] {$-b_{32}$};
\draw (2,-3) node[left] {$-b_{3r_3}$};
\end{scope}

\end{tikzpicture}
\caption{Surgery diagram of $Y$ and its associated plumbing graph $\Gamma$}
\label{small}
\end{figure}

If the intersection matrix of a plumbing graph $\Gamma$ is negative definite, which is always true for $b\geq 3$, then there is a canonical contact structure on $Y$ induced from a symplectic structure of the plumbing 4-manifold, where each vertex corresponds to a symplectic 2-sphere and each edge represents an orthogonal intersection between the symplectic 2-spheres ~\cite{GS2}. 
Furthermore, the canonical contact structure on $Y$ is contactomorphic to the contact structure defined by the complex tangency of a complex structure on the link of the corresponding singularity, which is called the $\emph{Milnor fillable}$ contact structure ~\cite{PS}. 

This paper aims to classify all possible list of minimal symplectic fillings of small Seifert 3-manifolds satisfying certain conditions and to prove that every such a minimal symplectic filling is obtained by a sequence of rational blowdowns from the minimal resolution of the corresponding weighted homogeneous surface singularity as it is true for a quotient surface singularity. 
Our strategy is as follows:
For a given minimal symplectic filling $W$ of $Y(-b; (\alpha_1, \beta_1), (\alpha_2, \beta_2), (\alpha_3, \beta_3))$ with $b\geq 4$, we glue $W$ with a concave cap $K$ to get a closed symplectic $4$-manifold $X$.  Then, since the concave cap $K$ always contains an embedded $(+1)$  $2$-sphere corresponding the central vertex, $X$ is a rational symplectic $4$-manifold~\cite{McD}. 
 Furthermore, the adjunction formula and intersection data impose a constraint on the homological data of $K$  in $X\cong \mathbb{CP}^2\sharp N\overline{\mathbb{CP}^2}$.
Under blowing-downs along all exceptional $2$-spheres away from the $(+1)$ $2$-sphere in $X\cong \mathbb{CP}^2\sharp N\overline{\mathbb{CP}^2}$, the concave cap $K$ becomes a neighborhood of symplectic $2$-spheres which are isotopic to $b$ number of  complex lines through symplectic $2$-spheres in $\mathbb{CP}^2$  (\cite{Sta1}, \cite{Sta2} for details). 
Since the symplectic deformation type of $W \cong X \setminus K$ is determined by  the isotopy class of a symplectic embedding of $K$  within a fixed homological embedding, we investigate a symplectic embedding of $K$ using  a \emph{curve configuration} corresponding to $W$, which consists of strands representing irreducible components of $K$ and exceptional $2$-spheres intersecting them (Definition~\ref{definition} and Figure~\ref{curveconfig-example}). 
Since the curve configuration corresponding to $W$ determines a symplectic embedding of $K$, we can recover all minimal symplectic fillings by investigating all possible curve configurations of $Y$.
Sometimes, we find a certain chain of symplectic $2$-spheres lying in $W$, which can be rationally blowing down, from the homological data of $K$.
Note that by rationally blowing down the chain of symplectic $2$-spheres lying in $W$, we obtain another minimal symplectic $W'$ from $W$. 
In this case, we keep track of changes in the homological data of $K$ so that we get a curve configuration of $W'$ from that of $W$.
Finally, by analyzing the effect of rational blowdown surgery on the curve configuration of minimal symplectic fillings, we obtain the following main result.

\begin{thm}
For a small Seifert $3$-manifold $Y(-b; (\alpha_1, \beta_1), (\alpha_2, \beta_2),  (\alpha_3, \beta_3))$ with its canonical contact structure
and $b \geq 4$, all minimal symplectic fillings of $Y$ are listed explicitly by curve configurations. Furthermore, they are also obtained by a sequence of rational blowdowns from the minimal resolution of the corresponding weighted homogeneous surface singularity.
\label{mainThm}
\end{thm}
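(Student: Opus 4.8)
The plan is to carry out the strategy sketched above in four stages. First, given a minimal symplectic filling $W$ of $(Y,\xi_{can})$ with $Y=Y(-b;(\alpha_1,\beta_1),(\alpha_2,\beta_2),(\alpha_3,\beta_3))$ and $b\geq 4$, I would construct an explicit concave symplectic cap $K$ for $(Y,\xi_{can})$, namely a symplectic plumbing along a graph obtained from $\Gamma$ by replacing the central $(-b)$-vertex with a suitable positive vertex, so that $K$ contains an embedded symplectic $2$-sphere of self-intersection $+1$ corresponding to that vertex; the hypothesis $b\geq 4$ is what makes such a cap available. Gluing gives a closed symplectic $4$-manifold $X=W\cup_Y K$, and since $X$ contains a symplectic sphere of non-negative self-intersection, $X$ is a rational symplectic $4$-manifold~\cite{McD}, hence $X\cong\mathbb{CP}^2\sharp N\overline{\mathbb{CP}^2}$ with $N$ determined by $b$, the $\alpha_i$, and $b_2(W)$.

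Second, I would pin down the homology classes of the irreducible symplectic spheres forming $K$ inside $X$. Combining the symplectic adjunction formula, the intersection pattern dictated by the plumbing graph, positivity of intersections of distinct irreducible symplectic curves, and the Hodge-index constraints on $H_2(X;\mathbb{Z})$, one obtains a finite list of admissible homological embeddings of $K$; after blowing down all exceptional spheres disjoint from the central $(+1)$-sphere, $K$ becomes a regular neighborhood of a configuration of $b$ symplectic lines in $\mathbb{CP}^2$ meeting along the pattern prescribed by the three arms. This is the small-Seifert analogue of Starkston's computation (cf.~\cite{Sta1},~\cite{Sta2}) and it realizes $(Y,\xi_{can})$ on the boundary.

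Third, for the classification assertion I would attach to $W$ its \emph{curve configuration} (Definition~\ref{definition}), which records how the exceptional spheres of the blow-down $X\to\mathbb{CP}^2$ meet the irreducible components of $K$, and prove that the symplectic deformation type of $W\cong X\setminus K$ is determined by this combinatorial data within the fixed homological embedding. This uses uniqueness, inside a fixed homology class, of the symplectic isotopy type of such configurations of spheres of self-intersection at least $-1$ in a rational surface (as exploited in~\cite{Sta1},~\cite{Sta2}), together with McDuff's structure results~\cite{McD}; consequently $W$ is pinned down by the homological plus intersection data, and enumerating the admissible curve configurations of $Y$ yields a complete and finite list of candidate minimal fillings, each realized by its curve configuration. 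This is the first statement of the theorem.

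Fourth, for the rational blowdown statement I would induct on $b_2(W)$. If $W$ is the minimal resolution of the corresponding weighted homogeneous singularity there is nothing to prove. Otherwise, reading off the curve configuration, I would locate in $W$ a linear chain of symplectic spheres --- a $C_p$-configuration in the sense of~\cite{FS} or a generalized chain in the sense of~\cite{Par},~\cite{SSW} --- rationally blow it down symplectically to obtain a minimal symplectic filling $W'$ of $(Y,\xi_{can})$ with $b_2(W')<b_2(W)$, from which $W$ is recovered by the inverse operation, and track how this surgery changes the homological data of $K$ so as to read off the curve configuration of $W'$ and check that it is again admissible. By induction every admissible curve configuration of $Y$, hence every minimal symplectic filling, arises from the minimal resolution by a sequence of rational blowdowns. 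The main obstacle I anticipate is precisely this last step: showing that every admissible curve configuration other than that of the minimal resolution contains an explicit rationally-blowdownable chain, and that the bookkeeping on configurations is faithfully compatible with rational blowdown on fillings. This requires a case analysis over the shapes of the three arms of $\Gamma$ and over the interaction of the chain with the central $(-b)$-sphere, and is where $b\geq4$ is used most heavily.
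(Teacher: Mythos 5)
Your first three stages coincide with the paper's setup: the concave cap $K$ with a $(+1)$ central sphere made available by $b\geq 4$, McDuff's theorem forcing $X=W\cup K$ to be rational, the adjunction/intersection constraints on the homological embedding of $K$, the reduction of $K$ to $b$ lines in $\mathbb{CP}^2$ after blowing down, and the passage to curve configurations. That part is sound and is essentially the paper's argument for the first half of the statement.

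The genuine gap is in your fourth stage, and it is threefold. First, your induction runs in the wrong direction: if you locate a chain \emph{in} $W$ and rationally blow it down, you get a filling $W'$ with $b_2(W')<b_2(W)$ from which $W$ is recovered by rational blow-\emph{up}; combined with the inductive hypothesis this exhibits $W$ as a blow-up of a blowdown of the minimal resolution, which is not the assertion. The paper instead produces, for each $W$, an intermediate filling $W'$ with \emph{larger} second Betti number that contains an embedded configuration $L$ whose rational blowdown yields $W$, and then disposes of $W'$ by Claim~\ref{case1} (it is assembled from fillings of disjoint linear subgraphs of $\Gamma$). Second, you restrict attention to linear chains ($C_p$-configurations and their generalizations); for $b=4$ this fails, since the paper must invoke the $3$-legged star-shaped configurations $\Gamma_{p,q,r}$ of Figure~\ref{Gammapqr} (following \cite{SSW}, \cite{BS}), and Starkston's examples \cite{Sta2} show that linear blowdowns alone are genuinely insufficient for some Seifert manifolds. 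Third, your proposed case analysis over ``the shapes of the three arms of $\Gamma$'' presumes the blowdownable configurations are visible inside $\Gamma$; the key point in the hardest $b\geq5$ case, where both $e_1$ and $e_2$ become components of $K$, is that the relevant chain $L$ (Figure~\ref{detail1}) is \emph{not} a subgraph of $\Gamma$ and only becomes embeddable in the intermediate filling $W'$ after an earlier blowdown has been performed. Without these corrections the rational-blowdown half of the theorem does not go through.
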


\begin{remark}
L.~Starkston~\cite{Sta1} originally described a general scheme how to obtain minimal symplectic fillings from given homological data of an embedding of the cap and got some results in special cases. 
L.~Starkston~\cite{Sta2} also showed that the isotopy type of a symplectic line arrangement is uniquely (up to deformation equivalent) determined by its intersection data in the cases that multi-intersection points of a symplectic line arrangement satisfy some mild conditions, which contain the cases appeared in Proposition~\ref{prop-3.2}.
Thus, by combining Proposition~\ref{lemcurveconfiguration} and Proposition~\ref{prop-3.2} in Section 3 with Starkston's result (Proposition 4.2 in~\cite{Sta2}), we concluded that there exists at most one minimal symplectic filling for each possible curve configuration. And then, we proved in Section 4 that every such a curve configuration gives the corresponding minimal symplectic filling, which implies the first statement in Theorem~\ref{mainThm} above. 
\end{remark}



\subsection*{Acknowledgements}

The authors would like to thank anonymous referees for their valuable comments. Jongil Park was supported by Samsung Science and Technology Foundation under Project Number SSTF-BA1602-02 and by the National Research Foundation of Korea(NRF) grant funded by the Korea government (No.2020R1A5A1016126). He also holds a joint appointment in the Research Institute of Mathematics, SNU.

%
%
%

\section{Preliminaries}

\subsection{Weighted homogeneous surface singularities and Seifert $3$-manifolds}
We briefly recall some basics of weighted homogeneous surface singularities and Seifert $3$-manifolds (\cite{Orl} for details). Suppose that $(w_0, \dots, w_n)$ are nonzero rational numbers. A polynomial $f(z_0,\dots, z_n)$ is called \emph{weighted homogeneous} of type $(w_0, \dots, w_n)$ if it can be expressed as a linear combination of monomials $z_0^{i_0}\cdots z_n^{i_n}$ for which
$$i_0/w_0+i_1/w_1+\cdots+i_n/w_n=1.$$
Equivalently, there exist nonzero integers $(q_0,\dots, q_n)$ and a positive integer $d$ satisfying $f(t^{q_0}z_0,\dots t^{q_n}z_n)=t^df(z_0,\dots, z_n)$. Then, a weighted homogeneous surface singularity $(X,0)$ is a normal surface singularity that is defined as the zero loci of weighted homogeneous polynomials of the same type. Note that there is a natural $\mathbb{C}^*$-action given by 
$$t\cdot(z_0,\dots, z_n)=(t^{q_0}z_0,\dots t^{q_n}z_n)$$
with a single fixed point $0 \in X$. This $\mathbb{C}^*$-action induces a fixed point free $S^1\subset \mathbb{C}^*$ action on the link $L:=X\cap \partial B$ of the singularity, where $B$ is a small ball centered at the origin. Hence, the link $L$ is a Seifert fibered $3$-manifold over a genus $g$ Riemann surface, denoted by $Y(-b;g; (\alpha_1, \beta_1), (\alpha_2, \beta_2), \dots, (\alpha_k, \beta_k))$ for some integers $b, \alpha_i$ and $\beta_i$ with $0<\beta_i<\alpha_i$ and $(\alpha_i, \beta_i)=1$. 
Note that $k$ is the number of singular fibers, and there is an associated star-shaped plumbing graph $\Gamma$: the central vertex has genus $g$ and weight $-b$, and each vertex in $k$ arms has genus $0$ and weight $-b_{ij}$ uniquely determined by the following continued fraction $$\frac{\alpha_i}{\beta_i}=[ b_{i1}, b_{i2}, \dots, b_{ir_i} ]=b_{i1}-\displaystyle {\frac{1}{b_{i2}-\displaystyle\frac{1}{\cdots-\displaystyle\frac{1}{b_{ir_i}}}}}$$ with $b_{ij}\geq 2$. 
For example, Figure~\ref{small} shows the case of $g=0$ and $k=3$, which is called a \emph{small} Seifert (fibered) $3$-manifold. By P.~Orlik and P.~Wagreich~\cite{OW}, it is well known that the plumbing graph $\Gamma$ is a dual graph of the minimal resolution of $(X, 0)$. Conversely, if the intersection matrix of $\Gamma$ is negative definite, there is a weighted homogeneous surface singularity whose dual graph of the minimal resolution is $\Gamma$ ~\cite{Pin}. 
Note that a Seifert $3$-manifold $Y$, as a boundary of a plumbed $4$-manifold according to $\Gamma$, inherits a canonical contact structure providing that each vertex represents a symplectic 2-sphere, all intersections between them are orthogonal, and the intersection matrix of $\Gamma$ is negative definite ~\cite{GS2}. Furthermore, if the Seifert $3$-manifold $Y$ can be viewed as the link $L$ of a weighted homogeneous surface singularity, then the canonical contact structure above is contactomorphic to the \emph{Milnor fillable} contact structure, which is given by $TL\cap JTL$ ~\cite{PS}. 

\subsection{Rational blowdowns and symplectic fillings}
Rational blowdown surgery, first introduced by R.~Fintushel and R.~Stern~\cite{FS}, is one of the most powerful cut-and-paste techniques which replaces a certain linear plumbing $C_p$ of disk bundles over a $2$-sphere whose boundary is a lens space $L(p^2, p-1)$ with a rational homology $4$-ball $B_p$, which has the same boundary. 
\begin{figure}[h]
\begin{center}
\begin{tikzpicture}[scale=0.95]
\filldraw (0,0) circle (2pt);
\filldraw (1,0) circle (2pt);
\filldraw (3,0) circle (2pt);
\filldraw (4,0) circle (2pt);
\draw (2,0) node {$\cdots$};
\draw (-0.2,0) node[above] {$-(p+2)$};
\draw (1,0) node[above] {$-2$};
\draw (3,0) node[above] {$-2$};
\draw (4,0) node[above] {$-2$};
\draw (0,0)--(1,0);
\draw (3,0)--(4,0);
\draw (1,0)--(1.5,0) (2.5,0)--(3,0);
\end{tikzpicture}
\end{center}
\caption{Linear plumbing $C_p$}
\end{figure}
Later, Fintushel-Stern's rational blowdown surgery was generalized by J.~Park~\cite{Par} using a configuration $C_{p,q}$ obtained by linear plumbing disk bundles over a $2$-sphere according to the dual resolution graph of $L(p^2,pq-1)$, which also bounds a rational homology $4$-ball $B_{p,q}$. 
In the case of a symplectic $4$-manifold $(X, \omega)$, rational blowdown surgery can be performed in the symplectic category: If all 2-spheres in the plumbing graph are symplectically embedded and their intersections are $\omega$-orthogonal, then the surgered $4$-manifold $X_{p,q}=(X-C_{p,q})\cup B_{p,q}$ also admits a symplectic structure induced from the symplectic structure of $X$ \cite{Sym1}, \cite{Sym2}. In fact, the rational homology $4$-ball $B_{p,q}$ admits a symplectic structure compatible with the canonical contact structure on the boundary $L(p^2,pq-1)$. More generally, in addition to the linear plumbing of $2$-spheres, there is a plumbing of $2$-spheres according to star-shaped plumbing graphs with $3$- or $4$-legs admitting a symplectic rational homology $4$-ball~\cite{SSW}, \cite{BS}. That is, the corresponding Seifert $3$-manifold $Y(-b, (\alpha_1,\beta_1), (\alpha_2,\beta_2), (\alpha_3,\beta_3))$, or $Y(-b, (\alpha_1,\beta_1), (\alpha_2,\beta_2), (\alpha_3,\beta_3), (\alpha_4,\beta_4))$ with a canonical contact structure has a minimal symplectic filling whose rational homology is isomorphic to that of the $4$-ball \cite{GS1}. For example, a plumbing graph $\Gamma_{p,q,r}$ in Figure~\ref{Gammapqr} can be rationally blowdown. We will use this later in the proof of the main theorem.
\begin{figure}[h]
\begin{tikzpicture}[scale=0.6]
\node[bullet] at (0,0){};
\node[bullet] at (1,0){};
\node[bullet] at (3,0){};
\node[bullet] at (4,0){};
\node[bullet] at (5,0){};
\node[bullet] at (7,0){};
\node[bullet] at (8,0){};

\node[bullet] at (4,-1){};
\node[bullet] at (4,-3){};
\node[bullet] at (4,-4){};

\node[below left] at (0,0){$-(p+3)$};
\node[above] at (1,0){$-2$};
\node[above] at (3,0){$-2$};
\node[above] at (4,0){$-4$};
\node[above] at (5,0){$-2$};
\node[above] at (7,0){$-2$};
\node[below right] at (8,0){$-(q+3)$};

\node[left] at (4,-1){$-2$};
\node[left] at (4,-3){$-2$};
\node[below] at (4,-4){$-(r+3)$};

\node at (2,0){$\cdots$};
\node at (6,0){$\cdots$};
\node at (4,-1.9){$\vdots$};

\draw (0,0)--(1.5,0);
\draw (2.5,0)--(5.5,0);
\draw (6.5,0)--(8,0);
\draw (4,0)--(4,-1.5);
\draw (4,-2.5)--(4,-4);

	\draw [thick,decorate,decoration={brace,mirror,amplitude=5pt},xshift=0pt,yshift=-7pt]
	(1,0) -- (3,0) node [black,midway,yshift=-11pt] 
	{$q$};

	\draw [thick,decorate,decoration={brace,mirror,amplitude=5pt},xshift=0pt,yshift=-7pt]
	(5,0) -- (7,0) node [black,midway,yshift=-11pt] 
	{$r$};

	\draw [thick,decorate,decoration={brace,amplitude=5pt},xshift=0pt,xshift=7pt]
	(4,-1) -- (4,-3) node [black,midway,xshift=11pt] 
	{$p$};

\end{tikzpicture}
\caption{Plumbing graph $\Gamma_{p,q,r}$}
\label{Gammapqr}
\end{figure}

As rational blowdown surgery does not affect the symplectic structure near the boundary, if there is a plumbing of disk bundles over symplectically embedded $2$-spheres that can be rationally blown down, then one can obtain another symplectic filling by replacing the plumbing with a rational homology $4$-ball. 
In the case of the link of quotient surface singularities, it was proven~\cite{BOz}, \cite{CP} that every minimal symplectic filling is obtained by a sequence of rational blowdowns from the minimal resolution of the singularity, which is diffeomorphic to a plumbing of disk bundles over symplectically embedded $2$-spheres:
First, they constructed a genus-$0$ or genus-$1$ Lefschetz fibration $X$ on each minimal symplectic filling of the link of a quotient surface singularity. Suppose that $w_1$ and $w_2$ are two words consisting of right-handed Dehn twists along curves in a generic fiber that represent the same element in the mapping class group of the generic fiber. If the monodromy factorization of $X$ is given by $w_1\cdot w'$, one can construct another Lefschetz fibration $X'$ whose monodromy factorization is given by $w_2\cdot w'$. The operation of replacing $w_1$ with $w_2$ is called a \emph{monodromy substitution}. 
Next, they showed that the monodromy factorization of each minimal symplectic filling of the link of a quotient surface singularity is obtained by a sequence of monodromy substitutions from that of the minimal resolution. 
Furthermore, these monodromy substitutions can be interpreted as rational blowdown surgeries topologically. Note that all rational blowdown surgeries mentioned here are linear: a certain linear chain $C_{p,q}$ of $2$-spheres is replaced with a rational homology $4$-ball.   

\subsection{Minimal symplectic fillings of small Seifert $3$-manifold}
\label{Sta}
In this subsection, we briefly review Starkston's results~\cite{Sta1}, ~\cite{Sta2} for minimal symplectic fillings of a small Seifert fibered $3$-manifold $Y(-b; (\alpha_1, \beta_1), (\alpha_2, \beta_2), (\alpha_3, \beta_3))$ with $b\geq4$. The condition $b\geq4$ on the weight (equivalently, degree) of a central vertex of the plumbing graph $\Gamma$ ensures that one can always choose a concave cap $K$, which is also star-shaped, with a $(+1)$ central $2$-sphere and $(b-4)$ 
arms, each of which consists of a single $(-1)$ $2$-sphere as in Figure~\ref{cap}. 
Here $[ a_{i1}, a_{i2}, \dots, a_{in_i} ]$ denotes a dual continued fraction of $[ b_{i1}, b_{i2}, \dots, b_{ir_i} ]$, that is,  $\frac{\alpha_i}{\alpha_i-\beta_i}=[ a_{i1}, a_{i2}, \dots, a_{in_i} ]$ while $\frac{\alpha_i}{\beta_i}=[ b_{i1}, b_{i2}, \dots, b_{ir_i} ]$.

\begin{figure}[h]
\begin{tikzpicture}[scale=1.2]
\draw(0,0)--(5.2,0);
\draw (0.5,0.2)--(0,-0.7);
\draw (1.5,0.2)--(1,-0.7);
\draw (2.5,0.2)--(2,-0.7);
\draw (3.9,0.2)--(3.4,-0.7);
\draw (4.9,0.2)--(4.4,-0.7);
\node at (4.,-0.6) {$\cdots$};

	\draw [decorate,decoration={brace,amplitude=5pt,mirror},xshift=2pt,yshift=-3pt]
	(3.4,-0.7) -- (4.4,-0.7) node [black,midway,yshift=-10pt] 
	{\footnotesize $b-4$};

\draw (0,-0.3)--(0.5,-1.2);
\draw (1,-0.3)--(1.5,-1.2);
\draw (2,-0.3)--(2.5,-1.2);

\node at (0.25, -1.5) {$\vdots$};
\node at (1.25, -1.5) {$\vdots$};
\node at (2.25, -1.5) {$\vdots$};

\draw (0,-1.8)--(0.5,-2.7);
\draw (1,-1.8)--(1.5,-2.7);
\draw (2,-1.8)--(2.5,-2.7);

\node[left] at (0,0) {$+1$};
\node[right] at (0.25, -0.25) {$-a_{11}$};
\node[right] at (1.25, -0.25) {$-a_{21}$};
\node[right] at (2.25, -0.25) {$-a_{31}$};
\node[right] at (3.65, -0.25) {$-1$};
\node[right] at (4.65, -0.25) {$-1$};

\node[right] at (0.25, -0.75) {$-a_{12}$};
\node[right] at (1.25, -0.75) {$-a_{22}$};
\node[right] at (2.25, -0.75) {$-a_{32}$};

\node[right] at (0.25, -2.25) {$-a_{1n_1}$};
\node[right] at (1.25, -2.25) {$-a_{2n_2}$};
\node[right] at (2.25, -2.25) {$-a_{3n_3}$};

\end{tikzpicture}
\caption{Concave cap $K$}
\label{cap}
\end{figure}


For a given minimal symplectic filling $W$ of $Y$, we glue $W$ and $K$ along $Y$ to get a closed symplectic $4$-manifold $X$. Then, the existence of a $(+1)$ $2$-sphere implies that $X$ is a rational symplectic $4$-manifold and, after a finite number of blowing-downs, $X$ becomes $\mathbb{CP}^2$ and the $(+1)$ $2$-sphere in $K$ becomes a complex line $\mathbb{CP}^1 \subset \mathbb{CP}^2$ (see Mcduff~\cite{McD} for details). Under these circumstances, it is natural to ask  the following question: What is the image of $K$ in $\mathbb{CP}^2$ under blowing-downs? In the case that $K$ is linear, which means that the corresponding $Y$ is a lens space, Lisca showed that the image of $K$ is two symplectic $2$-spheres in $\mathbb{CP}^2$, each of which is homologous to $\mathbb{CP}^1 \subset \mathbb{CP}^2$. By analyzing the proof of Lisca's result (Theorem 4.2 in \cite{Lis}), Starkston showed that the image of $K$ is $b$ symplectic $2$-spheres in $\mathbb{CP}^2$, each of which is homologous to $\mathbb{CP}^1 \subset \mathbb{CP}^2$~\cite{Sta1}. For the complete classification of minimal symplectic fillings of $Y$, one needs to classify the isotopy classes of these $b$ symplectic $2$-spheres, which called \emph{symplectic line arrangements}. Since all these spheres are $J$-holomorphic for some $J$ tamed by standard K\"{a}hler form of $\mathbb{CP}^2$ and are homologous to $\mathbb{CP}^1 \subset \mathbb{CP}^2$, they intersect each other at a single point for each pair of $2$-spheres. Note that these intersection points need not be all distinct. These intersection data of a symplectic line arrangement are determined by the homological data of $K$, which also have constraints from adjunction formula. In ~\cite{Sta2}, Starkston showed that symplectic line arrangements with certain types of intersections are isotopic to \emph{complex line arrangements}, that is, the corresponding $b$ symplectic $2$-spheres are isotopic (through symplectic spheres) to $b$ complex lines in $\mathbb{CP}^2$. For example, Starkston classified minimal symplectic fillings by an explicit computation of all possible homological embeddings of $K$ for some families of Seifert fibered spaces (Section 3 and 4.4 in ~\cite{Sta1} and Section 5 in ~\cite{Sta2}). 

 

%
%
%

\section{Strategy for main theorem}

As we saw in the previous section, for each minimal symplectic filling $W$ of $Y$, we obtain a rational symplectic $4$-manifold $X$ which is symplectomorphic to $\mathbb{CP}^2\sharp N\overline{\mathbb{CP}^2}$ for some integer $N$ by gluing $K$ to $W$ along $Y$.
Conversely, given an embedding of a concave cap $K$ into $\mathbb{CP}^2\sharp N\overline{\mathbb{CP}^2}$, we obtain a symplectic filling $W$ of $Y$ by taking a complement of $K$ in $\mathbb{CP}^2\sharp N\overline{\mathbb{CP}^2}$. So the classification of minimal symplectic fillings of $Y$ is equivalent to the classification of the embeddings of $K$ into $\mathbb{CP}^2\sharp N\overline{\mathbb{CP}^2}$ for some $N$. Hence, in order to investigate minimal symplectic fillings $W$ of $Y$, we first introduce two notions, \emph{homological data} and \emph{curve configuration} of corresponding embedding of $K$, which are defined as follows:

\begin{definition}
Suppose $W$ is a minimal symplectic filling  of a small Seifert $3$-manifold $Y$ equipped with a concave cap $K$. Then we have an embedding of $K$ into a rational symplectic $4$-manifold $X\cong\mathbb{CP}^2\sharp N\overline{\mathbb{CP}^2}$ such that the $(+1)$ 2-sphere in $K$ is identified with $\mathbb{CP}^1\subset \mathbb{CP}^2$. Let $l$ be a homology class represented by a complex line $\mathbb{CP}^1$ in $\mathbb{CP}^2$ and $e_i$ be homology classes of exceptional spheres coming from blowing-ups. Then $\{l,e_1,\dots, e_N\}$ becomes a basis for $H_2(X;\mathbb{Z})$, so that the homology class of each irreducible component of $K$ can be expressed in terms of this basis, which we call the \emph{homological data of $K$} for $W$. 
\label{definition}
\end{definition}

 Note that $K$ is sympelctically embedded in $X\cong\mathbb{CP}^2\sharp N\overline{\mathbb{CP}^2}$ and each irreducible component of $K$ can be assumed to be $J$-holomorphic for some $J$ tamed by standard K\"{a}hler form on $X$. Then, there is a sequence of blow-downs from $X$ to $\mathbb{CP}^2$ and we can find a $J$-holomorphic exceptional sphere $\Sigma_i$ whose homology class is $e_i$ disjoint from the central $(+1)$ 2-sphere of $K$ at each stage of blow-downs.  Because of the $J$-holomorphic condition and homological restrictions from the adjunction formula together with intersection data of $K$, the exceptional sphere $\Sigma_i$ intersects positively at most once with the image of an irreducible component of $K$ or is one of the image of irreducible components of $K$ (Proposition 4.4 in ~\cite{Lis}). In particular, for each image $C_j$ of irreducible components of $K$, the intersection number between $e_i$ and $[C_j]$ lies in $\{-1,0,1\}$. 
As mentioned in the previous section, we finally get a symplectic line arrangement in $\mathbb{CP}^2$ which consists of $J$-holomophic 2-spheres, each of which is the image of the first component of each arm under blow-downs. 
The intersection data of the symplectic line arrangement are determined by the homological data of $K$, so that it can be represented as a configuration of strands: Each of strands represents a $J$-holomorphic 2-sphere of a symplectic line arrangement in $\mathbb{CP}^2$ while the intersection of two strands represents a geometric intersection of two 2-spheres. Then, starting from the configuration of the symplectic line arrangement, we can draw a configuration $C$ of strands with degrees by blowing-ups according to the homological data of $K$ until we get $K$ in the configuration. Here the degree of each strand in $C$ means a self-intersection number of the strand. 
To be more precise, when we blow up a point $p$ on a strand in a configuration, we introduce a new strand with degree $-1$ to the point $p$ so that we resolve intersection of strands at $p$ and we decrease the degree of the strands containing $p$ by one.
Hence the configuration $C$, which represents the total transform of a symplectic line arrangement, contains strands representing irreducible components of $K$ and exceptional $(-1)$ $2$-spheres intersecting with the irreducible components.  
We say that two configurations $C_1$ and $C_2$ for $W$ are \emph{equivalent} if there is a bijective map between $(-1)$ strands preserving intersections with the irreducible components of $K$.

\begin{definition}
If there are no strands with degree less than or equal to $-2$ in $C$ except for irreducible components of $K$, we call the configuration $C$ the \emph{curve configuration} of a minimal symplectic filling $W$.
\end{definition}

\begin{remark}
Note that a curve configuration $C$ of $W$ consists of strands representing irreducible components of $K$ and exceptional $2$-spheres intersecting with the irreducible components of $K$. We denote the exceptional $2$-spheres by 
dash-dotted strands (See Figure~\ref{curveconfig-example} for example). 
\begin{figure}[htbp]
\begin{tikzpicture}[scale=0.85]
\begin{scope}
\node[bullet] at (2,0){};
\node[bullet] at (3,0){};
\node[bullet] at (4,0){};
\node[bullet] at (5,0){};
\node[bullet] at (6,0){};

\node[bullet] at (4,-1){};

\node[above] at (2,0){$-3$};
\node[above] at (3,0){$-2$};
\node[above] at (4,0){$-5$};
\node[above] at (5,0){$-4$};
\node[above] at (6,0){$-2$};

\node[left] at (4,-1){$-2$};

\draw (2,0)--(6,0);
\draw (4,0)--(4,-1);
\end{scope}
\begin{scope}[shift={(8,0.5)}]
\draw(0,0.05)--(4,0.05);
\draw[red,thick, dashdotted] (0,-0.09)--(4,-0.09);

\draw (0.5,0.2)--(0,-1.);
\node[right] at (0.2,-1) {$-3$};
\draw[red,thick, dashdotted] (0.1,-0.25)--(0.5,-0.25);
\draw[red,thick, dashdotted] (0.05,-0.35)--(0.45,-0.35);

\draw (1.5,0.2)--(1,-1.);
\node[right] at (1.1,-1) {$-2$};
\draw[red,thick, dashdotted] (1.1,-0.25)--(1.5,-0.25);
\draw[red,thick, dashdotted] (1.05,-0.35)--(1.45,-0.35);

\draw (2.5,0.2)--(2,-1.);
\node[right] at (2.1,-1){$-2$};
\draw[red,thick, dashdotted] (2.1,-0.3)--(2.5,-0.3);

\draw (3.5,0.2)--(3,-1.);
\node[right] at (3.1,-1){$-1$};
\draw[red,thick, dashdotted] (3.1,-0.3)--(3.5,-0.3);

\draw (0,-0.6)--(0.5,-1.8);
\node[right] at (0.5,-1.8) {$-2$};
\draw[red,thick, dashdotted] (0.1,-1.3)--(0.5,-1.3);

\draw (2,-0.6)--(2.5,-1.8);
\node[right] at (2.5,-1.8) {$-2$};
\draw (2.5, -1.4)--(2,-2.6);
\node[right] at (2,-2.6) {$-3$};
\draw[red,thick, dashdotted] (2.05,-2)--(2.45,-2);
\draw[red,thick, dashdotted] (2.0,-2.1)--(2.4,-2.1);

\node[left] at (0,0.05) {$+1$};



\end{scope}
\end{tikzpicture}

\caption{Plumbing graph $\Gamma$ and curve configuration for corresponding concave cap $K$}
\label{curveconfig-example}
\end{figure}

\end{remark}

\begin{remark}
We often use a terminology \emph{configuration of strands} when we deal with an intermediate configuration between a symplectic line arrangement and a curve configuration, or a configuration containing $K$ but there are strands with degree less than or equal to $-2$ other than irreducible components of $K$.
\end{remark}

\begin{prop}
For a given homological data of $K$ for $W$, there is a unique curve configuration $C$ up to equivalence
\label{lemcurveconfiguration}
\end{prop}

\begin{proof}
Since each strand in a curve configuration $C$ represents a $J$-holomorphic 2-sphere for some $J$ tamed by standard K\"{a}hler form on $X\cong\mathbb{CP}^2\sharp N\overline{\mathbb{CP}^2}$, all intersections between the strands represent positive geometric intersections between the corresponding $J$-holomorphic 2-spheres. Note that there is at most one intersection point between any two strands due to homological restrictions. Furthermore, if $e_i$ is a homology class of an exceptional $2$-sphere satisfying $e_i\cdot [C_j]\in\{0,1\}$ for any irreducible component $C_j$ of $K$, then there is a $(-1)$ strand $L_i$ in $C$ whose homology class is $e_i$: Otherwise, there is a blowing-up on the strand $L_i$ so that proper transform of $L_i$ becomes an irreducible component $C_j$ of $K$ whose intersection with $e_i$ is $-1$ contradicting the assumption. Hence there is a $(-1)$ strand $L_i$ representing a $J$-holomorphic exceptional sphere $\Sigma_i$ whose homology class is $e_i$ in $C$ if and only if $e_i\cdot [C_j]\in\{0,1\}$ for any irreducible component $C_j$ of $K$.


Let $C$ and $C'$ be two curve configurations for a fixed homological data of $K$ for $W$. Then, the numbers of $(-1)$ strands in $C$ and $C'$ are equal to the number of $e_i$'s satisfying the condition $e_i\cdot [C_j]\in \{0,1\}$ for any irreducible component $C_j$ of $K$. Hence we can construct a desired bijection between the $(-1)$ strands by finding correspondence between such $e_i$'s and $(-1)$ strands in two curve configurations.


\end{proof}

Now, we investigate minimal symplectic fillings of a given small Seifert $3$-manifold $Y$ by analyzing all the possible curve configurations. For this, we first determine all possible symplectic line arrangements.

\begin{prop}
\label{prop-3.2}
For minimal symplectic fillings of a small Seifert fibered $3$-manifold $Y(-b; (\alpha_1, \beta_1), (\alpha_2, \beta_2), (\alpha_3, \beta_3))$ with $b\geq4$, there are only two possible intersection relations of symplectic line arrangements which can be drawn as in Figure~\ref{linearrange}
\end{prop}

\begin{figure}[h]
\begin{tikzpicture}[scale=1.2]
\begin{scope}
\draw (0.1,0.2)--(2.6,0.2);
\draw (1,0.5)--(1,-1.5);
\draw (1.5,0.5)--(0.5,-1.5);
\draw (0.5,0.5)--(1.5,-1.5);
\draw (2,0.5)--(0,-1.5);
\draw (2.5,0.5)--(-.5,-1.5);
\node at (2.15,0.4) {$\cdots$};
\end{scope}
\begin{scope}[shift={(4.5,0)}]
\draw (-1,0.2)--(2,0.2);
\draw (1,0.5)--(1,-1.5);
\draw (1.5,0.5)--(0.5,-1.5);
\draw (0.5,0.5)--(1.5,-1.5);
\draw (-1,0.5)--(2,-1.5);
\node at (1.25,0.4) {$\cdots$};

\end{scope}

\end{tikzpicture}
\caption{Symplectic line arrangements}
\label{linearrange}
\end{figure}
\begin{proof}
Since $Y$ is a small Seifert $3$-manifolds with $b\geq4$, we can always choose a concave cap $K$ with a $(+1)$ central $2$-sphere and $(b-4)$ 
arms, each of which consists of a single $(-1)$ $2$-sphere as in Figure~\ref{cap}. Furthermore, since the blowing-downs are disjoint from the central $2$-sphere in $K$, each of $(b-1)$ number of arms in $K$ descends to a single $(+1)$ $J$-holomorphic $2$-sphere intersecting at a distinct  point with an image of the central $2$-sphere of $K$ under the blowing-downs. 
Let $C_1, C_2, \dots, C_{b-4}$ be the images of $(b-4)$ number of $(-1)$ $2$-spheres in $K$ under the blowing-downs.
Then, they should have a common intersection point in $\mathbb{CP}^2$: Otherwise, we have distinct two points $p$ and $q$ on some $C_i$ so that $C_i$ intersects $C_j$ and $C_k$ at $p$ and $q$ respectively. Let $r$ be an intersection point of $C_j$ and $C_k$. Then, any $J$-holomorphic $2$-sphere coming from an arm of $K$ other than $C_1,\dots C_{b-4}$ must pass two of $p, q$ and $r$, which is a contradiction. 
\begin{figure}[h]
\begin{tikzpicture}[scale=1.2]
\filldraw (5/12,-0.) circle (1pt);
\filldraw (19/12,-0.) circle (1pt);
\filldraw (1,-7/8.) circle (1pt);

\node at (0.1,0)[left]{$C_i$};
\node at (1.25,-1.25)[right]{$C_j$};
\node at (.75,-1.25)[left]{$C_k$};

\node at (7/12,0)[above]{$p$};
\node at (21/12,0)[below]{$q$};
\node at (.9,-7/8)[left]{$r$};

\draw (0.1,0)--(1.9,0);
\draw (.25,0.25)--(1.25,-1.25);
\draw (1.75,0.25)--(.75,-1.25);
\end{tikzpicture}
\caption{Configuration for $C_i$, $C_j$ and $C_k$}
\end{figure}

If $b\geq 6$, similar argument shows that there is at most one $J$-holomorphic $2$-sphere coming from an arm of $K$ intersecting at a different point from the common intersection point $p$ with $C_i$, which proves the proposition. 

In the case of $b\leq 5$, we can easily check that Figure~\ref{linearrange} gives all possible symplectic line arrangements: If $b=5$, then there is only one $C_1$ coming from $(-1)$ $2$-sphere from $K$. Recall that there are at most two intersection points on $C_1$. If there is only one intersection point on $C_1$, then we get the left-hand figure in Figure~\ref{linearrange}. If there are two intersection points $p$ and $q$ on $C_1$, then two of three $J$-holomorphic $2$-spheres coming from the arms of $K$ other than $C_1$ pass $p$ and the other passes $q$ (or vice versa), so that we get the right-hand figure in Figure~\ref{linearrange}. For $b=4$ case, we have only three strands in a figure for a symplectic line arrangement except the strand from $(+1)$ $2$-sphere so that we have only two possibilities.
\end{proof}

Next, for the complete classification of minimal symplectic fillings of $Y$, we need to consider the isotopy classes of embeddings of $K$ with a fixed homological data in $X\cong \mathbb{CP}^2\sharp N\overline{\mathbb{CP}^2}$. By blowing down $J$-holomorphic $2$-spheres, it descends to isotopic types of corresponding symplectic line arrangement in $\mathbb{CP}^2$. 
By Proposition 4.1 and 4.2 in ~\cite{Sta2}, two symplectic line arrangements in Figure~\ref{linearrange} are actually isotopic to  complex line arrangements through symplectic configurations, which means that there is a unique minimal symplectic filling up to symplectic deformation equivalent for each possible homological data of $K$. 
Since a homological data of $K$ gives a unique curve configuration $C$ up to equivalence by Proposition~\ref{lemcurveconfiguration}, we analyze minimal symplectic fillings of small Seifert $3$-manifold $Y$ by considering all possible curve configurations obtained from the complex line arrangements in Figure~\ref{linearrange}.\\

As previously mentioned, in the case of quotient surface singularities that include all lens spaces and some small Seifert $3$-manifolds, every minimal symplectic filling is obtained by linear rational blowdown surgeries from the minimal resolution of the corresponding singularity. 
However, this is not true anymore for small Seifert $3$-manifolds in general.  For example, a rational homology $4$-ball of $\Gamma_{p,q,r}$ in Figure~\ref{Gammapqr} might not be obtained by linear rational blowdown surgeries. 
Nevertheless, many cases such as $b \geq 5$ are in fact obtained by linear rational blowdowns from their minimal resolutions. For the case of $b=4$, one might need $3$-legged rational blowdown surgeries to get a minimal symplectic filling.
Hence, it is natural to prove the two cases $b \geq 5$ and $b =4$ separately. 

\subsection{$b\geq 5$ case}
We consider all possible curve configurations coming from two complex line arrangements in Figure~\ref{linearrange} which can be divided into three types. First, we need to blow up all intersection points in the line arrangements so that we get two configurations as in Figure~\ref{blowingup1}.
\begin{figure}[h]
\begin{center}
\begin{tikzpicture}[scale=1.4]
\begin{scope}

\draw (0,0.2)--(2,0.2);
\draw (0.4,0.5)--(0.4,-1.5);
\draw (0.8,0.5)--(0.8,-1.5);
\draw (1.2,0.5)--(1.2,-1.5);
\draw (1.6,0.5)--(1.6,-1.5);
\draw[red,thick, dashdotted] (0,-0.6)--(2,-0.6);
\node[left,red,thick, dashdotted] at (0,-0.6) {$-1$};
\node[left] at (0,0.2) {$+1$};
\node at (1,-2) {$(a)$};

\node at (1.43,0.35) {$\cdots$};
\node[above] at (0.4,0.5) {$0$};
\node[above] at (0.8,0.5) {$0$};
\node[above] at (1.2,0.5) {$0$};
\node[above] at (1.6,0.5) {$0$};
\end{scope}

\begin{scope}[shift={(4,0)}]
\node at (1,-2) {$(b)$};

\draw (0,0.2)--(2,0.2);
\draw (0.4,0.5)--(0.4,-1.5);
\draw (0.8,0.5)--(0.8,-0.6);
\draw (1.2,0.5)--(1.2,-0.8);
\draw (1.6,0.5)--(1.6,-1);
\draw[red,thick, dashdotted] (0.6,0)--(1.8,0);
\draw[red,thick, dashdotted](0.3,-0.6)--(0.9,-0.3);
\draw[red,thick, dashdotted](0.3,-1)--(1.3,-0.5);
\draw[red,thick, dashdotted](0.3,-1.4)--(1.7,-0.7);
\node at (1.43,0.35) {$\cdots$};
\node[above] at (0.8,0.5) {$-1$};
\node[above] at (1.2,0.5) {$-1$};
\node[above] at (1.6,0.5) {$-1$};
\node[left] at (0,0.2) {$+1$};
\node [above] at (0.8,-1.15) {$\ddots$}; 
\end{scope}
\end{tikzpicture}
\end{center}
\caption{Blowing-ups of the line arrangements}
\label{blowingup1}
\end{figure}
There are two possibilities for a strand representing exceptional sphere in intermediate configurations coming from blowing-ups : Blow up some intersection points or not. 
Once we blow up an intersection point on a strand representing an exceptional sphere $\Sigma$, which means the proper transform of $\Sigma$ becomes an irreducible component of $K$, we should blow up all the intersection points except one intersection point because each strand intersecting the strand for $\Sigma$ become irreducible components of distinct arms in $K$. We can also blow up the last intersection point we did not blow up to get another curve configuration, but it is not necessary in general.

In case of we blow up an intersection point on the dash-dotted strand of $(a)$ in Figure~\ref{blowingup1}, we get a configuration $(a)'$ in Figure~\ref{startingposition}.
\begin{figure}[h]
\begin{center}
\begin{tikzpicture}[scale=1.3]
\begin{scope}
\node at (1,-2) {$(a)'$};

\draw (0,0.2)--(2,0.2);
\draw (0.4,0.5)--(0,-0.5);
\draw (0,-0.05)--(0.4,-1.6);
\node[right] at (0.4,-1.6) {$c$};
\draw (0.8,0.5)--(0.8,-0.6);
\draw (1.2,0.5)--(1.2,-0.8);
\draw (1.6,0.5)--(1.6,-1);

\draw[red,thick, dashdotted](0,-0.7)--(0.9,-0.25);
\draw[red,thick, dashdotted](0.1,-1.1)--(1.3,-0.5);
\draw[red,thick, dashdotted](0.2,-1.5)--(1.7,-0.75);

\node[left, red] at (0,-0.7) {$e_1$};
\node[left, red] at (0.1,-1.1) {$e_2$};

\node at (1.43,0.35) {$\cdots$};
\node[above] at (0.8,0.5) {$-1$};
\node[above] at (1.2,0.5) {$-1$};
\node[above] at (1.6,0.5) {$-1$};
\node[left] at (0,0.2) {$+1$};
\node[above left] at (0.95,-1.225) {$\ddots$};
\end{scope}

\begin{scope}[shift={(4,0)}]
\node at (1,-2) {$(b)$};

\draw (0,0.2)--(2,0.2);
\draw (0.4,0.5)--(0.4,-1.6);
\node[right] at (0.4,-1.6) {$c$};

\draw (0.8,0.5)--(0.8,-0.6);
\draw (1.2,0.5)--(1.2,-0.8);
\draw (1.6,0.5)--(1.6,-1);
\draw[red,thick, dashdotted] (0.6,0)--(1.8,0);
\draw[red,thick, dashdotted](0.3,-0.6)--(0.9,-0.3);
\draw[red,thick, dashdotted](0.3,-1)--(1.3,-0.5);
\draw[red,thick, dashdotted](0.3,-1.4)--(1.7,-0.7);
\node[left, red] at (0.3,-0.6) {$e_1$};
\node[left, red] at (0.3,-1) {$e_2$};

\node at (1.43,0.35) {$\cdots$};
\node[above] at (0.8,0.5) {$-1$};
\node[above] at (1.2,0.5) {$-1$};
\node[above] at (1.6,0.5) {$-1$};
\node[left] at (0,0.2) {$+1$};
\node [above] at (0.8,-1.15) {$\ddots$}; 
\end{scope}
\end{tikzpicture}
\end{center}
\caption{Two configurations}
\label{startingposition}
\end{figure} 
When we start with two configurations in Figure~\ref{startingposition}, we can assume without loss of generality that the first three arms become \emph{essential arms} in $K$, which consist of strands with degree less than or equal to $-2$. Since the degree of other arms already $-1$, we can only blow up $e_1$ and $e_2$ among dotted exceptional strands. In conclusion, we can divide all the possible curve configurations into following three types.
\begin{itemize}
\item{Type A:}
Curve configurations obtained from $(a)$ in Figure~\ref{blowingup1} without blowing up the  exceptional strand.
\item{Type B:}
Curve configurations obtained from $(a)'$ or $(b)$ in Figure~\ref{startingposition} by blowing up at most one $e_i$ $(1\leq i \leq 2)$.
\item{Type C:}
Curve configurations obtained from $(a)'$ or $(b)$ in Figure~\ref{startingposition} by blowing up both $e_1$ and $e_2$.
\end{itemize}

%

\subsection{$b=4$ case}
We divide all curve configurations for $b=4$ into the following two cases:
\begin{itemize}
\item Curve configurations  of type A, B or C as in $b\geq5$ case.
\item{Type D:} Curve configurations  obtained from $(b)$ in Figure~\ref{startingposition} by blowing up all exceptional $(-1)$ strands.
\end{itemize}
Then, since we can deal with the first case using the same argument in the $b\geq 5$ case,
it suffices to prove Type D case whose corresponding curve configurations are coming from some configurations $C_{p,q,r}$ in Figure~\ref{b4cases}, which are obtained from the right-hand figure in Figure~\ref{startingposition} (See Subsection 4.4 for details). The main difference between  $b=4$ case and $b\geq5$ case is that one can use all three exceptional $2$-spheres to get a concave cap $K$ for $b=4$, while one can use only $e_1$ and $e_2$ for $b\geq5$ from the right-hand figure in Figure~\ref{startingposition}.

\begin{figure}[h]

\begin{tikzpicture}[scale=.9]
\begin{scope}
\draw (-0.5,0)--(3.5,0);
\node[left] at (-0.5,0){$+1$};
\draw (0,0.5)--(0,-3);
\node[above] at (0,0.5){$-2$};
\draw (1.5,0.5)--(1.5,-2);
\node[above] at (1.5,0.5){$-2$};
\draw (3,0.5)--(3,-3);
\node[above] at (3,.5){$-2$};

\draw (-0.2,-1.2)--(0.95,-2.35);
\node[below] at (0.95,-2.35){$-2$};

\draw[red,thick, dashdotted] (1.7,-1.2)--(0.55,-2.35);
\node[red, above] at (1.025,-1.775){$e_1$};

\draw (1.3,-0.3)--(2.45,-1.45);
\node[below] at (2.45,-1.45){$-2$};
\draw[red,thick, dashdotted] (3.2,-0.3)--(2.05,-1.45);
\node[red, above] at (2.525,-0.875){$e_2$};

\draw[red,thick, dashdotted] (-0.2,-2.1)--(1.7,-4);
\node[red, below] at (0.64,-3.05){$e_3$};
\draw (3.2,-2)--(1.3,-4);
\node[below] at (2.35,-3){$-2$};

\end{scope}

\begin{scope}[shift={(6,0)}]

\draw (-0.5,0)--(3.5,0);
\node[left] at (-0.5,0){$+1$};
\draw (0,0.5)--(0,-3);
\node[above] at (0,0.5){$-(r+2)$};
\draw (1.5,0.5)--(1.5,-2);
\node[above] at (1.5,0.5){$-(p+2)$};
\draw (3,0.5)--(3,-3);
\node[above] at (3,.5){$-(q+2)$};

\draw (-0.1,-1.4)--(0.3,-1.4);
\draw (0.2,-1.3)--(0.2,-1.7);
\node at (0.45,-1.65) {$\ddots$};

\draw (0.45,-1.95)--(0.85,-1.95);
\draw (0.75,-1.85)--(0.75,-2.25);

	\draw [decorate,decoration={brace,amplitude=5pt},yshift=10pt,xshift=5pt]
	(0,-1.5) -- (0.75,-2.25) node [black,midway,xshift=13pt, yshift=10pt] 
	{$p+1$};

\draw[red,thick, dashdotted] (1.7,-1.2)--(0.55,-2.35);
\node[red, below right] at (1.025,-1.775){$e_1$};


\draw (1.4,-0.5)--(1.8,-0.5);
\draw (1.7,-0.4)--(1.7,-0.8);
\node at (1.95,-0.75) {$\ddots$};

\draw (1.95,-1.05)--(2.35,-1.05);
\draw (2.25,-0.95)--(2.25,-1.35);

	\draw [decorate,decoration={brace,amplitude=5pt},yshift=10pt,xshift=5pt]
	(1.5,-0.6) -- (2.25,-1.35) node [black,midway,xshift=13pt, yshift=9pt] 
	{$q+1$};

\draw[red,thick, dashdotted] (3.2,-0.3)--(2.05,-1.45);
\node[red, below right] at (2.525,-0.875){$e_2$};

\draw[red,thick, dashdotted] (-0.2,-2.1)--(1.7,-4);
\draw(1.55,-3.35)--(1.95,-3.35);
\draw(1.65,-3.65)--(1.65,-3.25);
\draw(1.35,-3.55)--(1.75,-3.55);
\draw(1.45,-3.45)--(1.45,-3.85);

\node[red, below] at (0.64,-3.05){$e_3$};
\draw(3.1,-2.2)--(2.7,-2.2);
\draw(2.8,-2.1)--(2.8,-2.5);
\draw(2.9,-2.4)--(2.5,-2.4);
\draw(2.6,-2.3)--(2.6,-2.7);

	\draw [decorate,decoration={brace,mirror,amplitude=5pt},yshift=-5pt,xshift=5pt]
	(1.65,-3.65) -- (2.9,-2.4) node [black,midway,xshift=10pt, yshift=-10pt] 
	{$r+1$};

\filldraw (2.25,-2.95) circle(0.5pt);
\filldraw (2.4,-2.8) circle(0.5pt);
\filldraw (2.1,-3.1) circle(0.5pt);

\end{scope}
\end{tikzpicture}
\caption{Curve configurations $C_{0,0,0}$ and $C_{p,q,r}$}
\label{b4cases}

\end{figure}

\section{Proof of main theorem}
In this section, for a given possible curve configuration $C$, we show that there is a sequence of rational blowdowns from the minimal resolution $\widetilde{M}$ to minimal symplectic filling $W$ of $Y$ corresponding to $C$. Since any minimal symplectic filling of a lens space is obtained by a sequence of rational blowdowns from a linear plumbing which is the minimal resolution corresponding to the lens space \cite{BOz}, it suffices to construct a sequence of curve configurations $C=C_0, C_1, \dots, C_n$ such that each minimal symplectic filling $W_{i}$ corresponding to $C_i$ is obtained from $W_{i+1}$ by replacing a certain linear plumbing $L_i$ with its minimal symplectic filling. Here $C_n$ denotes a curve configuration for the minimal resolution $\widetilde{M}$. 
As previously mentioned, since our possible symplectic line arrangements are isotopic to complex line arrangements, it suffices to work in complex category with a symplectic form $\omega$ coming from the standard K\"{a}hler form on $\mathbb{CP}^2$. 
In order to show that there is a symplectic embedding of $L_i$ in $W_{i+1}$, we construct a configuration $C_{i+1}'$ of strands, which is not a curve configuration for $W_{i+1}$, from a complex line arrangement by blowing-ups with same homological data of $K$ for $W_{i+1}$ so that we have $L_i$ disjoint from $K$ in $C'_{i+1}$. Since we work in complex category, each strand in $C_{i+1}'$ can be considered as a complex rational curve in a rational surface $X$ while the intersections between strands represent positive geometric intersections between the corresponding rational curves. This observation implies that $L_i$ is symplectically embedded in $W_{i+1}$.

 Now we introduce the notion of \emph{standard blowing-ups} which is frequently appeared in the construction of $W_i$ from $W_{i+1}$. Let $K$ and $K'$ be two star-shaped plumbing graphs having the same number of arms together with $(+1)$ central vertex, and let $-a_{ij}$ $(1\leq j \leq n_i)$ and $-a'_{ij}$ $(1\leq j \leq n'_i)$ be the weights (equivalently, degrees) of $j^{\text{th}}$-vertex in the $i^{\text{th}}$-arm of $K$ and $K'$ respectively. We say $K'\leq K$ if $n'_i\leq n_i$ and $a'_{ij}\leq a_{ij}$ for any $i$ and $j$ except for $a'_{in'_i} < a_{in'_i}$ in the case of $n'_i<n_i$. The condition $K'\leq K$ guarantees that we can obtain a configuration of strands representing $K$ by blowing ups from a configuration representing $K'$ in the following way: We blow up non-intersection points of the last component of each $i^{\text{th}}$-arm in $K'$ consecutively until we get $n_i$ components, and then we blow up each component at non-intersection points to get the right weights.
 
\begin{definition}
Let $C'$ be a configuration of strands obtained from a complex line arrangement by blowing-ups containing a star-shaped plumbing graph $K'$ with a homological data.
If $K'\leq K$ and the degree of all strands in $C'\setminus K'$ is $-1$, then we can obtain a curve configuration $\widetilde{C'}$ from $C'$ by blowing-up at non-intersection points only. In this case, we say that the curve configuration $\widetilde{C'}$ is obtained by \emph{standard blowing-ups} from $C'$.
\label{defstandardblowingup}
\end{definition}

\begin{remark}
Note that, with a homological data of $K'$ in $C'$, the standard blowing-ups induce a unique homological data of $K$ for $\widetilde{C'}$:
Let $e$ be a homology class of an exceptional sphere coming from blowing-ups from $C'$ to $\widetilde{C'}$.
Since we blow-up non-intersection points, $e$ appears in at most two $[C_{j_1}^{i_1}]$ and $[C_{j_2}^{i_2}]$ where $C_{j}^i$ denotes $j^{\text{th}}$-component in $i^{\text{th}}$-arm of $K$. Furthermore, if $e$ appears in two $[C_{j_1}^{i_1}]$ and $[C_{j_2}^{i_2}]$, then $i_1=i_2=i$ and $j_2=j_1+1$ with $e\cdot [C_{j_1}^{i}]=1$ and
$e\cdot [C_{j_1+1}^{i}]=-1$. 
\end{remark}

For a given star-shaped plumbing graph $K'\leq K$, in general if $ n'_i < n_i$  for some $i$ where $n_i'$ and $n_i$ are the number of components in $i^{\text{th}}$-arm of $K'$ and $K$ respectively, there are possibly other ways of blowing-ups to get $i^{\text{th}}$-arm of $K$ from that of $K'$. 
Let $C'$ be a configuration of strands containing $K'\leq K$ as in Definition~\ref{defstandardblowingup}. Assume furthermore that $ n'_i < n_i$ for some $i$.  Let $\widetilde{C'}$ be a curve configuration obtained from $C'$ by standard blowing-ups. Then we get the following three fundamental lemmas.


\begin{lem}
Let $C$ be a curve configuration for $K$, and let $W$ be the minimal symplectic filling of $Y$ corresponding
to $C$. Suppose $C'$ is a configuration for $K' \leq K$  such that the standard blowing-ups
$\widetilde{C'}$ of $C'$ differs from $C$ only in the components $C^i_{j}$ for $n'_{i} \leq
j \leq n_i$. Let $\widetilde{W}$ denote the minimal symplectic filling of $Y$
corresponding to $\widetilde{C'}$. Then there is a symplectically embedded linear plumbing $L$ of $2$-spheres
determined by $[b_1, b_2, \dots, b_r]$  in  $\widetilde{W}$ such that $W$ is obtained by  $\widetilde{W}$ by replacing the plumbing $L$ with some minimal filling $W_L$ of the lens space boundary of the linear plumbing $L$. Furthermore,
$[b_1, b_2, \dots, b_r]$ is the dual of $[(a_{in'_i}-a'_{in'_i}), a_{in'_{i}+1}, a_{in'_{i}+2}, \dots a_{in_i}]$, where $-a_{ij}$ and $-a'_{ij}$ are the weights of $j^{\text{th}}$-component in the $i^{\text{th}}$-arm of $K$ and $K'$ respectively.
\label{fundamentallem}
\end{lem}

\begin{proof}
We can 
assume that $a_{in'_i}-a'_{in'_i}\geq 2$ because the way of blowing-ups from $i^{\text{th}}$-arm of $K'$ to that of $K$ remains same when we replace $K'$ with $K''$ where $K''$ is obtained from $K'$ by blow up the last component of the $i^{\text{th}}$-arm. 

First we show that there is a symplectic linear embedding $L$ in $\widetilde{W}$. Let $S$ be a configuration of strands containing $K$ obtained as follows: We blow up the last component in the $i^{\text{th}}$-arm of $K'$ in $C'$ at a non-intersection point so that we have two consecutive strands of degree $-a'_{in'_i}-1$ and $-1$. Since the continued fraction $[b_1, b_2, \dots, b_r]$ is dual to $[(a_{in'_i}-a'_{in'_i}), a_{in'_{i}+1}, a_{in'_{i}+2}, \dots a_{in_i}]$ by the definition of $L$, we obtain a linear chain of strands containing the rest of $i^{\text{th}}$-arm in $K$ and $L$ from the two strands by blowing up consecutively at intersection points as in Figure~\ref{constructL}, so that there is an embedding $L$ in the complement of $K$ in a rational surface $X$. 
Furthermore, since we started from the same homological data of $K'$ in $C'$ and since a blowing-up for $C'$ to $S$ either increases the number of components or decreases the degree of an irreducible component of $K$, 
the homological data of $K$ for both $\widetilde{C'}$ and $S$ are the same, so that there is a symplectic embedding $L$ in $\widetilde{W}$.
\begin{figure}[h]
\begin{tikzpicture}[xscale=0.9]
\begin{scope}

\draw (0,-1.4)--(0.5,0.2);
\draw (0,-1.)--(0.5,-2.6);
\node at (1.2,-0.6){$-a'_{in'_i}-1$};
\node at (0.55,-1.8){$-1$};

\filldraw[red] (1/16,-1.2) circle (1.2pt);


\draw[-latex,line width=1.5pt] (2.9-1.,-1.2)--(4.15-1.,-1.2) ;

\end{scope}

\begin{scope}[shift={(4.25+1.25-1,1.1)},scale=.8]

\node[right] at (0.25, -0.25) {$-a_{in'_i}$};
\node[right] at (0.25, -0.9) {$-a_{in'_i+1}$};
\node[right] at (0.25, -2) {$-a_{in_i}$};
x
\node[right] at (0.25, -3.35) {$-b_{r}$};
\node[right] at (0.25, -4) {$-b_{r-1}$};
\node[right] at (0.25, -5.2) {$-b_{1}$};

\draw (0.5,0.2)--(0,-0.7);

\draw (0,-0.3)--(0.5,-1.2);

\node at (0.25, -1.2) {$\vdots$};

\draw (0.5,-1.6)--(0.,-2.5);
\draw[red,thick, dashdotted] (0.,-2.25)--(0.5,-3.15);
\draw (0.5,-2.9)--(0,-3.8);
\draw (0,-3.8+.4)--(0.5,-4.7+.4);

\node at (0.25, -4.7+.4) {$\vdots$};

\draw (0,-5.1+.4)--(0.5,-6+.4);

\end{scope}

\end{tikzpicture}
\caption{Find an embedding of $L$}
\label{constructL}
\end{figure}


Before we examine the effect of replacing $L$ with its minimal symplectic filling $W_L$, we briefly review the classification of minimal symplectic fillings of lens space which can be found in ~\cite{BOn}, ~\cite{Lis}. For notational convenience we denote a linear plumbing graph and a lens space determined the plumbing graph by the same $L$. For a lens space $L$ given by $[b_1, b_2, \dots, b_r]$ , we can choose a concave cap $K_{L}$ of the form
\begin{tikzpicture}[scale=1.1]
\node[bullet] at (-1,0) {};
\node[bullet] at (0,0) {};
\node[bullet] at (1,0) {};
\node[bullet] at (2.5,0) {};
\draw[dotted] (1.5,0)--(2,0);
\draw (-1,0)--(1.5,0);
\draw (2,0)--(2.5,0);
\node[above] at (-1,0) {$+1$};
\node[above] at (0,0) {$-a_{1}\!\!+\!\!1$};
\node[above] at (1,0) {$-a_{2}$};
\node[above] at (2.5,0) {$-a_{n}$};
\end{tikzpicture}
, where $[a_1, a_2, \dots, a_n]$ is a dual continued fraction of $[b_1, b_2, \dots, b_r]$. 
Suppose $X_L\cong \mathbb{CP}^2 \sharp N_0 \overline{\mathbb{CP}^2}$ is a rational symplectic $4$-manifold obtained by gluing two plumbings according to $L$ and $K_{L}$ whose second homology class is generated by $\{l \} \cup E=\{E_1,\dots, E_{N_0}\}$.
Then, for a given minimal symplectic filling $W_L$ of $L$, we get a rational symplectic $4$-manifold $X_{W_{L}}\cong\mathbb{CP}^2 \sharp N \overline{\mathbb{CP}^2}$ by gluing $W_{L}$ and  $K_{L}$ and the image of $K_{L}$ under blowing-downs is isotopic to two complex lines in $\mathbb{CP}^2$, which means that a minimal symplectic filling of $L$ is determined by a homological data of $K_{L}$ in $\mathbb{CP}^2 \sharp N \overline{\mathbb{CP}^2}$ for some $N$. Hence, we draw a curve configuration $C_{W_{L}}$ for $W_{L}$ starting from a configuration of two $(+1)$ strands in $\mathbb{CP}^2$ by blowing-ups with only one $(+1)$ strand. 
This observation shows that the effect of replacing $L$ in $X_L$ with $W_{L}$ is the following: We have another rational symplectic $4$-manifold $X_{W_{L}}\cong\mathbb{CP}^2 \sharp N \overline{\mathbb{CP}^2}$ and the second homology classes in the complement of $L$ are changed so that

 \begin{align*}
l&\rightarrow l\\
[L_i]^E &\rightarrow [L_i]^e \phantom{0} (1\leq i \leq n).\\ 
\end{align*}

where $[L_i]^E$ and $[L_i]^e$ are homology classes of irreducible components of $K_L$ in terms of $\{l \} \cup E=\{E_1,\dots, E_{N_0}\}$ and $\{l \} \cup e=\{e_1,\dots, e_N\}$ respectively.

Let $[C^i_j]^{C}$ and $[C^i_j]^{C'}$ be homology classes of $C^i_j$ in $C$ and $C'$ respectively. Note that $C$ is a curve configuration completed from the last $(-a'_{in'_i})$ strand in the $i^{\text{th}}$-arm of $K'$
by blowing-ups without using any other strand in $C'$. 
If we blow up in the same ways starting with a single $(+1)$ strand instead of $(-a'_{in'_i})$ strand, we get a curve configuration $C_{W_{L}}$ containing $K_{L}$. Hence there is a minimal symplectic filling $W_L$ of $L$ whose homological data of $K_{L}$ in $X_{W_L}(=W_L\cup K_L) \cong \mathbb{CP}^2 \sharp N\overline{\mathbb{CP}^2}$ are given by $[L_j]=[C^i_{n'_i+j-1}]^{C} $ except for $[L_0]=l$ and $[L_1]= l+[C^i_{n'_i}]^{C}-[C^i_{n'_i}]^{C'}$, where $e=\{e_1,\dots, e_N\}$ is homology classes of exceptional spheres coming from the blowing-ups from $C'$ to $C$. 

Finally, we show that the minimal symplectic filling  $W$ corresponding to $C$ is given by $(\widetilde{W}\setminus L) \cup W_L$.
Suppose $X'$ is a rational symplectic $4$-manifold obtained by blowing-ups from a complex line arrangement so that it contains $C'$. 
We take a small Darboux neighborhood $B'$ of a disk $D$ in $C^i_{n'_i}$ of $K'$ so that $B'$ is disjoint from any other irreducible components of $K'$. Now we arrange all the blowing-ups from $C'$ to $C$ inside $B'$ and let $B$ be blowing-ups of $B'$. Then we have a symplectic embedding of $K$ in 
$X=(X' \setminus B') \cup B$ and a homological data of $K$ agrees with $C$. Furthermore, $B \setminus K$ is symplectic deformation equivalent to $W_L$: Consider two complex lines 
in $\mathbb{CP}^2$ and a symplectic embedding of $B'$ such that the image of $D$ in $C^i_{n'_i}$ is a disk in one complex line and $B'$ is disjoint from the other complex line. 
By the construction of $B$, there is a symplectic embedding of $K_L$ in $(\mathbb{CP}^2 \setminus B') \cup B$, where the first component of $K_L$ is the complex line in $(\mathbb{CP}^2 \setminus B')$ and the complement of $K_L$ in $(\mathbb{CP}^2 \setminus B') \cup B$ is symplectic deformation equivalent to $W_L$. Since the complement of a neighborhood of $\mathbb{CP}^1$ in $\mathbb{CP}^2$ is a ball, $B \setminus K = B \setminus K_L$ is also symplectic deformation equivalent to $W_L$. Note that $K=(K\cap (X'\setminus B')) \cup (K \cap B)=(K' \setminus B') \cup (K \cap B)$. Hence we have
$$W=X\setminus K=((X'\setminus B')\setminus K)\cup (B\setminus K)\cong (X'\setminus(K'\cup B')) \cup W_L$$
By a similar argument, $\widetilde{W}\cong (X'\setminus(K'\cup B')) \cup L$, so that $W$ is obtained from $\widetilde{W}$ by replacing $L$ by $W_L$.
\end{proof}
Assume furthermore that there is a $(-1)$ curve intersecting both $C^{i}_{n_i'}$ and another irreducible component $C^k_{l}$ of $K'$ in $C'$.
Then, there is a slight modification of the Lemma~\ref{fundamentallem}, involving two arms of $K$.

\begin{lem}
Suppose that there is a $(-1)$ curve $E$ intersecting $C^{i}_{n_i'}$ and $C^{k}_{l}$ of $K'$ in $C'$ with $a'_{kl} < a_{kl}$. If the standard blowing-ups $\widetilde{C'}$ of $C'$ differs from $C$ only in $C^k_{l}$ and components $C^i_{j}$ for $n'_{i} \leq
j \leq n_i$, then there is a symplectically embedded linear plumbing $L\subset W$ described in Figure~\ref{lem4.2} such that $W$ is obtained by  $\widetilde{W}$ by replacing the plumbing $L$ with some minimal filling $W_L$. Furthermore,
$[b_1, b_2, \dots, b_r]$ is the dual of $[(a_{in'_i}-a'_{in'_i})+1, a_{in'_{i}+1}, a_{in'_{i}+2}, \dots a_{in_i}]$, where $-a_{ij}$ and $-a'_{ij}$ are the weights of $j^{\text{th}}$-component in the $i^{\text{th}}$-arm of $K$ and $K'$ respectively.
\label{fundamentallem3}
\end{lem}
\begin{figure}[htbp]
\begin{tikzpicture}[scale=0.9]
\begin{scope}[scale=0.6]
\node[bullet] at (0,1.5){};

\node[bullet] at (-2,0){};
\node[bullet] at (2,0){};

\node[bullet] at (-2,-1){};
\node[bullet] at (2,-1){};

\node[bullet] at (-2,-3){};
\node[bullet] at (2,-3){};

\draw (0,1.5)--(2,0)--(2,-1.5);
\draw (0,1.5)--(-2,0)--(-2,-1.5);

\draw[dotted](2,-1.5)--(2,-2.5);
\draw[dotted](-2,-1.5)--(-2,-2.5);

\draw(2,-2.5)--(2,-3);
\draw(-2,-2.5)--(-2,-3);
\draw (0,1.5) node[above] {$-b_{1}-1$};

\draw (-2,0) node[right] {$-2$};
\draw (-2,-1) node[right] {$-2$};
\draw (-2,-3) node[right] {$-2$};
\draw (2,0) node[right] {$-b_{2}$};
\draw (2,-1) node[right] {$-b_{3}$};
\draw (2,-3) node[right] {$-b_{r}$};
	\draw [decorate,decoration={brace,mirror, amplitude=5pt},xshift=-10pt,yshift=0pt]
	(-2,0) -- (-2,-3) node [black,midway,xshift=-37pt] 
	{$a_{kl}-a_{kl}'-1$};
\end{scope}

\begin{scope}[shift={(4,1)}]
\draw(0,0)--(2,0);
\draw (0.5,0.2)--(0,-0.7);
\draw (1.5,0.2)--(1,-0.7);

\draw (1,-0.3)--(1.5,-1.2);

\node at (1.25, -1.5) {$\vdots$};

\draw (1,-2)--(1.5,-2.9);

\node[left] at (0,0) {$+1$};
\node[below] at (-0.25, -0.7) {$a_{kl}'-a_{kl}$};
\node[right] at (1.35, -0.35) {$a_{in_i'}'-a_{in_i'}$};

\node[right] at (1.3, -0.95) {$-a_{in_i'+1}$};

\node[right] at (1.25, -2.45) {$-a_{in_i}$};

\end{scope}
\end{tikzpicture}
\caption{A plumbing graph of $L$ and its concave cap $K_L$}
\label{lem4.2}
\end{figure}

\begin{proof}
A proof is similar to that of Lemma~\ref{fundamentallem} except for blowing-ups at two intersection points of $E$ in $C'$  to find an embedding $L$.
That is, we construct a configuration $S$ of strands containing $K$ as in Figure~\ref{detail4.2} whose homological data is equal to that of $\widetilde{C'}$, so that there is a symplectic embedding of $L$ in $\widetilde{W}$.

\begin{figure}[h]
\begin{tikzpicture}[scale=.9]
\begin{scope}

\draw (0.4,0.5)--(0.4,-1.5);

\draw (1.6,0.5)--(1.6,-1.5);
\draw[red,thick, dashdotted] (0,-0.5)--(2,-0.5);
\node[left,red] at (0,-0.5) {$-1$};
\node[red,right] at (2,-0.5) {$E$};

\node[bullet,red] at (0.4,-.5){};

\node[above] at (0.4,0.5) {$-a_{kl}'$};

\node[above] at (1.6,0.4) {$-a_{in_{i}'}'$};

\draw[very thick, ->, >=stealth] (3,-0.65)--(4,-0.65);
\end{scope}
\begin{scope}[shift={(5,0)}]
\draw (0.4,0.5)--(0.4,-1.5);

\draw (3.8,0.5)--(3.8,-1.5);
\node[left,red] at (0.2,-0.8) {$-1$};
\draw[red,thick, dashdotted] (0.2,-0.8)--(1.1,-0.2);

\draw (0.9,-0.2)--(1.8,-0.8);
\node[below] at (1.25,-.5){$-2$};
\draw (1.6,-0.8)--(2.5,-0.2);
\node[below] at (2.25,-.5){$-2$};
\node at (2.8,-.5){$\cdots$};
\draw (3.1,-0.2)--(4,-0.8);
\node[below] at (3.45,-.5){$-2$};

\node[bullet,red] at (3.8,-0.68){};

\node[above] at (0.4,0.5) {$-a_{kl}$};

\node[above] at (3.8,0.4) {$-a_{in_{i}'}'$};

\draw [decorate,decoration={brace, amplitude=5pt},xshift=0pt,yshift=3pt]
	(1.3,-0.2) -- (3.7,-.2) node [black,midway,yshift=12pt] 
	{$a_{kl}-a_{kl}'$};
\draw[very thick, ->, >=stealth] (2.1,-1.8)--(1.1,-2.6);

\end{scope}
\begin{scope}[shift={(-.5,-3.3)}]
\draw (0.4,0.5)--(0.4,-1.5);

\draw (9.6,0.5)--(9.6,-1.5);
\draw[red,thick, dashdotted] (0.2,-0.8)--(1.1,-0.2);

\draw (0.9,-0.2)--(1.8,-0.8);
\node[below] at (1.25,-.5){$-2$};
\draw (1.6,-0.8)--(2.5,-0.2);
\node[below] at (2.25,-.5){$-2$};

\node at (2.8,-.5){$\cdots$};
\draw (3.1,-0.2)--(4,-0.8);
\node[below] at (3.45,-.5){$-2$};

\draw (3.8,-0.8)--(4.7,-0.2);
\node[above] at (4.7,-.2){$-b_{1}-1$};

\draw (4.5,-0.2)--(5.4,-0.8);
\node[below] at (4.95,-.5){$-b_{2}$};
\node at (5.7,-.5){$\cdots$};
\draw (6,-.8)--(6.9,-.2);
\node[below] at (6.55,-.5){$-b_{r}$};

\draw[red,thick, dashdotted] (6.7,-.2)--(7.6,-.8);

\draw (7.4,-.8)--(8.3,-.2);
\node[below] at (8.1,-.5){$-a_{in_i}$};

\node at (8.6,-.5){$\cdots$};

\draw (8.9,-.8)--(9.8,-.2);
\node[below] at (10.5,-0.4) {$-a_{in_i'+1}$};

\node[above] at (0.4,0.5) {$-a_{kl}$};

\node[above] at (9.6,0.5) {$-a_{in_i'}$};

\draw [decorate,decoration={brace, amplitude=5pt},xshift=0pt,yshift=3pt]
	(1.3,-0.2) -- (3.7,-.2) node [black,midway,yshift=12pt] 
	{$a_{kl}-a_{kl}'-1$};

\end{scope}

\end{tikzpicture}

\caption{Embedding of $L$ to $\widetilde{W}$}
\label{detail4.2}
\end{figure}
Next, by viewing $L$ as a two-legged plumbing graph with a degree $(-b_{1}-1)$ of a central vertex, we get a concave cap $K_L$ as in Figure~\ref{lem4.2}: Starting from zero section and infinity section together with two generic fibers of $\mathbb{F}_{b_{1}-1}$, we construct arms corresponding to $[-2,\dots,-2]$ and $[-b_{2},\dots,-b_{r}]$. Then, by blowing-ups at intersection points consecutively of the proper transform of zero section and the arm corresponding to $[-b_{2},\dots,-b_{r}]$, we get a concave cap $K_L$ for $L$. As before, for a given minimal symplectic filling of $L$, we get a rational symplectic $4$-manifold by gluing $K_L$ along $L$ and the image of $K_L$ in $\mathbb{CP}^2$ under blowing-downs is three complex lines in $\mathbb{CP}^2$ intersecting generically, implying that any curve configuration for $K_L$ is obtained from blowing up of an intersection point between two complex lines in $\mathbb{CP}^2$.
Therefore, using blowing-up data from $C'$ to $C$ (Figure~\ref{detail4.21} and Figure~\ref{detail4.22}), we get a minimal symplectic filling $W_L$ of $L$.

\begin{figure}[htbp]

\begin{tikzpicture}[scale=0.8]
\begin{scope}[scale=1]

\draw (0.4,0.5)--(0.4,-1.5);

\draw (1.6,0.5)--(1.6,-1.5);
\draw[red,thick, dashdotted] (0,-0.5)--(2,-0.5);
\node[left,red] at (0,-0.5) {$-1$};

\node[above] at (0.4,0.5) {$-a_{kl}'$};

\node[above] at (1.6,0.4) {$-a_{in_i'}'$};

\draw[very thick, ->, >=stealth] (3,-0.65)--(4,-0.65);
\end{scope}
\begin{scope}[shift={(5.7,0.9)}]

\draw (0.3,0.2)--(-0.2,-0.7);
\draw (1.5,0.2)--(1,-0.7);

\draw (1,-0.3)--(1.5,-1.2);

\draw[red,thick, dashdotted] (-0.2,-0.4)--(1.5,-1.7);

\node at (1.25, -1) {$\vdots$};
\node at (1.25, -2) {$\vdots$};

\draw (1,-1.1)--(1.5,-2.1);

\draw (1,-2.3)--(1.5,-3.2);

\node[right] at (0.05, -0.25) {$-a_{kl}$};
\node[right] at (1.25, -0.25) {$-a_{in_i'}$};

\node[right] at (1.25, -0.75) {$-a_{in_i'+1}$};

\node[right] at (1.25, -2.75) {$-a_{in_i}$};

\end{scope}
\end{tikzpicture}

\caption{blowing-ups from $C'$ to $C$}
\label{detail4.21}
\end{figure}
\begin{figure}[htbp]
\begin{tikzpicture}[scale=0.8]
\begin{scope}[scale=1]

\draw (0,0.2)--(2,0.2);
\draw (0.4,0.5)--(0.4,-1.5);

\draw (1.6,0.5)--(1.6,-1.5);
\draw[red,thick, dashdotted] (0,-0.6)--(2,-0.6);
\node[left,red] at (0,-0.6) {$-1$};
\node[left] at (0,0.2) {$+1$};

\node[above] at (0.4,0.5) {$0$};

\node[above] at (1.6,0.5) {$0$};

\draw[very thick, ->, >=stealth] (3,-0.65)--(4,-0.65);
\end{scope}
\begin{scope}[shift={(5.5,0.9)}]
\draw(0,0)--(2,0);
\draw (0.5,0.2)--(0,-0.7);
\draw (1.5,0.2)--(1,-0.7);

\draw (1,-0.3)--(1.5,-1.2);

\draw[red,thick, dashdotted] (0,-0.4)--(1.5,-1.7);

\node at (1.25, -1) {$\vdots$};
\node at (1.25, -2) {$\vdots$};

\draw (1,-1.1)--(1.5,-2.1);

\draw (1,-2.3)--(1.5,-3.2);

\node[left] at (0,0) {$+1$};
\node[below] at (-0.25, -0.7) {$a_{kl}'-a_{kl}$};
\node[right] at (1.25, -0.35) {$a_{in_i'}'-a_{in_i'}$};

\node[right] at (1.25, -.9) {$-a_{in_i'+1}$};

\node[right] at (1.25, -2.75) {$-a_{in_i}$};

\end{scope}
\end{tikzpicture}
\caption{Curve configuration for $W_L$}
\label{detail4.22}
\end{figure}
Suppose that $X'$ is a rational symplectic $4$-manifold containing $C''$, obtained from $C'$ by blowing down $E$, and let $B'$ be a small Darboux neighborhood of the intersection point coming from the blowing down. Then a similar argument as in Lemma~\ref{fundamentallem} above shows that the minimal symplectic filling $W$ corresponding to $C$ is obtained from $\widetilde{W
}$ by replacing $L$ with $W_L$.
\end{proof}
Assume that $C'$ is a curve configuration containing $K'\leq K$ corresponding to a minimal symplectic filling $W'$ of another small Seifert $3$-manifold $Y'$ and $\widetilde{C'}$ is a curve configuration obtained from $C'$ by standard blowing-ups. Then we can describe a minimal symplectic filling $\widetilde{W}$ of $Y$ corresponding to $\widetilde{C'}$ explicitly. 
\begin{lem}
Under the assumption above, there is a symplectically embedded plumbing of $2$-spheres $\Gamma'$ in the minimal resolution $\widetilde{M}$ so that a minimal symplectic filling $\widetilde{W}$ of $Y$ corresponding to $\widetilde{C'}$ is obtained from $\widetilde{M}$ by replacing $\Gamma'$ with $W'$.
\label{fundamentallem2}
\end{lem}

\begin{proof}
Let $K_0$ be a plumbing graph determined by black strands in $(a)$-Figure~\ref{blowingup1}. Clearly, $K_0\leq K$ so that there is a curve configuration $C_{\widetilde{M}}$ obtained by standard blowing-ups from $(a)$. We first show that the curve configuration $C_{\widetilde{M}}$ corresponds to the minimal resolution $\widetilde{M}$. 
Recall that a concave cap $K$ in Figure~\ref{cap} can be found in ~\cite{SSW} and ~\cite{Sta1}: Starting from the zero and infinity sections with $(b-1)$ generic fibers of a Hirzebruch surface $\mathbb{F}_1$ which can be drawn as $(a)$ in Figure~\ref{blowingup1}, we blow up intersection points of generic fibers and the infinity section so that we have a $(-b)$ rational curve which corresponds to the central vertex of the minimal resolution graph $\Gamma$.
Then, we obtain a linear chain of strands containing both $i^{\text{th}}$-arm of $K$ and $\Gamma$ from two $(-1)$ strands by blowing-ups as in Figure~\ref{constructarm}.
As a result, we have a configuration $S_{\widetilde{M}}$ containing both $\Gamma$ and $K$ disjointly, so that the complement of $K$ in a rational surface $X_Y$ is the minimal resolution $\widetilde{M}$ and $K$ is a concave cap for $Y$. By using the same argument as in the proof of Lemma~\ref{fundamentallem} above, we conclude that $C_{\widetilde{M}}$ is a curve configuration for $\widetilde{M}$.

\begin{figure}[h]

\begin{tikzpicture}[xscale=0.9]

\begin{scope}
\draw(0.2,0)--(4,0);
\draw (0.6,0.2)--(0.6,-2.6);
\draw (1.2,0.2)--(1.2,-2.6);
\draw (1.8,0.2)--(1.8,-2.6);
\draw (2.4,0.2)--(2.4,-2.6);
\draw (3.6,0.2)--(3.6,-2.6);

\draw (0.2,-2.4)--(4,-2.4);

\node[left] at (0.2,0) {$+1$};
\node[left] at (0.6,-1.2){$0$};
\node[left] at (1.2,-1.2){$0$};
\node[left] at (1.8,-1.2){$0$};
\node[left] at (2.4,-1.2){$0$};

\node at (2.9,-1.2){$\cdots$};
\node[left] at (3.6,-1.2){$0$};

\node at (4.45,-1.2){$\rightarrow$};
\end{scope}

\begin{scope}[shift={(4.25+1.25,0)}]
\draw(0,0)--(4.4,0);

\draw (0,-2.4)--(4.4,-2.4);

\draw (0,-1.4)--(0.5,0.2);
\draw (0,-1.)--(0.5,-2.6);

\draw (0.75,-1.4)--(1.25,0.2);
\draw (0.75,-1.)--(1.25,-2.6);

\draw (1.5,-1.4)--(2,0.2);
\draw (1.5,-1.)--(2,-2.6);

\draw (2.25,-1.4)--(2.75,0.2);
\draw (2.25,-1.)--(2.75,-2.6);

\draw (2.25+1.3,-1.4)--(2.75+1.3,0.2);
\draw (2.25+1.3,-1.)--(2.75+1.3,-2.6);

\node[left] at (0,0) {$+1$};
\node[left] at (0,-2.4) {$-b$};
\node at (0.55,-0.6){$-1$};
\node at (1.3,-0.6){$-1$};
\node at (2.05,-0.6){$-1$};
\node at (2.8,-0.6){$-1$};
\node at (2.8+1.3,-0.6){$-1$};
\node at (3.1,-1.2){$\cdots$};
\node at (0.55,-1.8){$-1$};
\node at (1.3,-1.8){$-1$};
\node at (2.05,-1.8){$-1$};
\node at (2.8,-1.8){$-1$};

\node at (2.8+1.3,-1.8){$-1$};
\end{scope}

\begin{scope}[shift={(9,0)}]
\end{scope}

\end{tikzpicture}
\caption{Blowing up Hirzebruch surface $\mathbb{F}_1$}
\label{constructK}
\end{figure}

\begin{figure}[h]
\begin{tikzpicture}[xscale=0.9]
\begin{scope}

\draw (0,-1.4)--(0.5,0.2);
\draw (0,-1.)--(0.5,-2.6);
\node at (0.55,-0.6){$-1$};
\node at (0.55,-1.8){$-1$};
\filldraw[red] (1/16,-1.2) circle (1.2pt);

\draw[-latex,line width=1.5pt] (2.9-1.,-1.2)--(4.15-1.,-1.2) ;

\end{scope}

\begin{scope}[shift={(4.25+1.25-1,1.1)},scale=.8]

\node[right] at (0.25, -0.25) {$-a_{i1}$};
\node[right] at (0.25, -0.9) {$-a_{i2}$};
\node[right] at (0.25, -2) {$-a_{in_1}$};
x
\node[right] at (0.25, -3.35) {$-b_{ir_i}$};
\node[right] at (0.25, -4) {$-b_{ir_i-1}$};
\node[right] at (0.25, -5.2) {$-b_{i1}$};

\draw (0.5,0.2)--(0,-0.7);

\draw (0,-0.3)--(0.5,-1.2);

\node at (0.25, -1.2) {$\vdots$};

\draw (0.5,-1.6)--(0.,-2.5);
\draw[red,thick, dashdotted] (0.,-2.25)--(0.5,-3.15);
\draw (0.5,-2.9)--(0,-3.8);
\draw (0,-3.8+.4)--(0.5,-4.7+.4);

\node at (0.25, -4.7+.4) {$\vdots$};

\draw (0,-5.1+.4)--(0.5,-6+.4);

\end{scope}
\end{tikzpicture}
\caption{Construction of each arm in $K$ and $\Gamma$}
\label{constructarm}
\end{figure}

In the same way, we could get a configuration $S_{\Gamma'}$ of strands containing both $K'$ and a plumbing graph $\Gamma'$ so that the complement of $K'$ in the resulting rational symplectic $4$-manifold $X_{Y'}\cong \mathbb{CP}^2 \sharp M \overline{\mathbb{CP}^2}$ is a plumbing of $2$-spheres according to $\Gamma'$. Note that $M\!-\!1$ is the number of blowing-ups in the standard blowing-ups from $(a)$ in Figure~\ref{blowingup1} to $K'$.
Since $K'\leq K$, we obtain a configuration $S'_{\widetilde{M}}$ of strands containing $\Gamma'$ and $K$ disjointly from $S_{\Gamma'}$ by standard blowing-ups at non-intersection point in the last component of each $i^{\text{th}}$-arm of $K'$. 
Let $X = X_{Y'} \sharp N \overline{\mathbb{CP}^2}$ be a resulting rational symplectic 4-manifold. 
Then $X\cong X_Y=\widetilde{M} \cup K$ because the number of blowing-ups in the standard blowing-ups from $(a)$ in Figure~\ref{blowingup1} to $K$ is equal to a sum of numbers of blowing-ups for $(a)$ in Figure~\ref{blowingup1} to $K'$ and $K'$ to $K$.
Furthermore, a homological data of $K$ in $S'_{\widetilde{M}}$ is also equal to that of $C_{\widetilde{M}}$. Hence  a plumbing graph $\Gamma'$  is symplectically embedded in $\widetilde{M}$.



If there is a sequence of blowing-ups from a configuration of strands representing $K'$ to $K$, then we have a corresponding symplectic cobordism $Z$ from $Y'$ to $Y$ because the total transform of $K'$ is still a concave cap for $Y'$ while $K$ is a concave cap for $Y$. In particular, if $C'$ is a curve configuration for a minimal symplectic filling $W'$ then we get a curve configuration $C$ for a minimal filling $W$ by the sequence of blowing-ups from $K'$ to $K$ and $W=W' \cup Z$. In case of standard blowing-ups from $K'$ to $K$, we can deduce from the construction of $S'_{\widetilde{M}}$  that the corresponding cobordism is equal to $\widetilde{M} \setminus \Gamma'$. Hence we have $\widetilde{W}= W' \cup (\widetilde{M}\setminus \Gamma')$, so that $\widetilde{W}$ is obtained from $\widetilde{M}$ by replacing $\Gamma'$ with $W'$. 
\end{proof}

\subsection{Proof for type A}

For a curve configuration $C$ of type A, we want to show that the corresponding minimal symplectic filling $W$ is obtained from the minimal resolution $\widetilde{M}$ by replacing each arm in the resolution graph $\Gamma$ with its minimal symplectic symplectic filling. 
Since we already know in the proof of Lemma~\ref{fundamentallem2} above that a curve configuration $C_{\widetilde{M}}$ ,which is obtained from (a) in Figure~\ref{blowingup1} by standard blowing-ups, corresponds to $\widetilde{M}$, by applying repeatedly Lemma~\ref{fundamentallem} with $K'$ as in $(a)$ in Figure~\ref{blowingup1} so that the corresponding $L$ is  one of three arms in $\Gamma$, we conclude that all minimal symplectic fillings corresponding to a curve configuration $C$ of type A are obtained by a sequence of rational blowdowns from the minimal resolution $\widetilde{M}$.\\

The following example illustrates this case.

\begin{example}
Let $Y$ be a small Seifert $3$-manifold whose associated plumbing graph and concave cap are shown in Figure~\ref{b5plumbing}. Then, there are two curve configurations of type A as in Figure~\ref{curveconfiguration1}. 
Of course, there exist other curve configurations of type B and C for minimal symplectic fillings of $Y$, which will be treated in Example~\ref{example2} and Example~\ref{example3} later.
Note that each dash-dotted strand represents an exceptional $2$-sphere, that is, a $2$-sphere with self-intersection $-1$. 
We omit the degree of irreducible components of the concave cap for the sake of convenience in the figure. 
The left-hand curve configuration in Figure~\ref{curveconfiguration1} is obtained by standard blowing-ups from that of Figure~\ref{blowingup1} which means that the corresponding minimal filling is the minimal resolution. 
Note that only the third arm in the plumbing graph $\Gamma$ has a nontrivial minimal symplectic filling that is obtained by rationally blowing down the $(-4)$ 2-sphere. Using Lisca's description of the minimal symplectic fillings of lens spaces, we obtain the right-hand curve configuration in Figure~\ref{curveconfiguration1}, which represents a minimal symplectic filling obtained from the minimal resolution by rationally blowing down the $(-4)$ 2-sphere in the third arm.
\begin{figure}[h]
\begin{tikzpicture}[scale=0.9]
\begin{scope}
\node[bullet] at (2,0){};
\node[bullet] at (3,0){};
\node[bullet] at (4,0){};
\node[bullet] at (5,0){};
\node[bullet] at (6,0){};

\node[bullet] at (4,-1){};

\node[above] at (2,0){$-3$};
\node[above] at (3,0){$-2$};
\node[above] at (4,0){$-5$};
\node[above] at (5,0){$-4$};
\node[above] at (6,0){$-2$};

\node[left] at (4,-1){$-2$};

\draw (2,0)--(6,0);
\draw (4,0)--(4,-1);
\end{scope}
\begin{scope}[shift={(8,0.5)}]
\draw(0,0)--(4,0);
\draw (0.5,0.2)--(0,-1.);
\draw (1.5,0.2)--(1,-1.);
\draw (2.5,0.2)--(2,-1.);
\draw (3.5,0.2)--(3,-1.);

\draw (0,-0.6)--(0.5,-1.8);

\draw (2,-0.6)--(2.5,-1.8);
\draw (2.5, -1.4)--(2,-2.6);

\node[left] at (0,0) {$+1$};
\node[right] at (0.25, -0.25) {$-3$};
\node[right] at (1.25, -0.25) {$-2$};
\node[right] at (2.25, -0.25) {$-2$};
\node[right] at (3.25, -0.25) {$-1$};

\node[right] at (0.25, -1) {$-2$};

\node[right] at (2.25, -1) {$-2$};
\node[right] at (2.25, -2.25) {$-3$};

\end{scope}
\end{tikzpicture}
\caption{Plumbing graph $\Gamma$ and its concave cap $K$}
\label{b5plumbing}
\end{figure}
\label{b5example}

\begin{figure}[h]
\begin{tikzpicture}[scale=0.9]
\begin{scope}
\draw(0,0.05)--(4,0.05);
\draw[red,thick, dashdotted] (0,-0.1)--(4,-0.1);

\draw (0.5,0.2)--(0,-1.);
\draw[red,thick, dashdotted] (0.1,-0.25)--(0.5,-0.25);
\draw[red,thick, dashdotted] (0.05,-0.35)--(0.45,-0.35);

\draw (1.5,0.2)--(1,-1.);
\draw[red,thick, dashdotted] (1.1,-0.25)--(1.5,-0.25);
\draw[red,thick, dashdotted] (1.05,-0.35)--(1.45,-0.35);

\draw (2.5,0.2)--(2,-1.);
\draw[red,thick, dashdotted] (2.1,-0.3)--(2.5,-0.3);

\draw (3.5,0.2)--(3,-1.);
\draw[red,thick, dashdotted] (3.1,-0.3)--(3.5,-0.3);

\draw (0,-0.6)--(0.5,-1.8);
\draw[red,thick, dashdotted] (0.1,-1.3)--(0.5,-1.3);

\draw (2,-0.6)--(2.5,-1.8);
\draw (2.5, -1.4)--(2,-2.6);
\draw[red,thick, dashdotted] (2.1,-2)--(2.5,-2);
\draw[red,thick, dashdotted] (2.0,-2.1)--(2.4,-2.1);

\node[left] at (0,0.05) {$+1$};



\end{scope}
\begin{scope}[shift={(6,0)}]
\draw(0,0.05)--(4,0.05);
\draw[red,thick, dashdotted] (0,-0.1)--(4,-0.1);

\draw (0.5,0.2)--(0,-1.);
\draw[red,thick, dashdotted] (0.1,-0.25)--(0.5,-0.25);
\draw[red,thick, dashdotted] (0.05,-0.35)--(0.45,-0.35);

\draw (1.5,0.2)--(1,-1.);
\draw[red,thick, dashdotted] (1.1,-0.25)--(1.5,-0.25);
\draw[red,thick, dashdotted] (1.05,-0.35)--(1.45,-0.35);

\draw (2.5,0.2)--(2,-1.);

\draw (3.5,0.2)--(3,-1.);
\draw[red,thick, dashdotted] (3.1,-0.3)--(3.5,-0.3);

\draw (0,-0.6)--(0.5,-1.8);
\draw[red,thick, dashdotted] (0.1,-1.3)--(0.5,-1.3);

\draw (2,-0.6)--(2.5,-1.8);
\draw[red,thick, dashdotted] (2.05,-1.2)--(2.45,-1.2);
\draw (2.5, -1.4)--(2,-2.6);
\draw[red,thick, dashdotted] (2.05,-2.05)--(2.45,-2.05);

\node[left] at (0,0.05) {$+1$};

\end{scope}
\end{tikzpicture}
\caption{Two curve configurations in Example~\ref{example1}}
\label{curveconfiguration1}
\end{figure}
\label{example1}
\end{example}\smallskip

\subsection{Proof for type B}
For a curve configuration $C$ of type B, we want to show that the corresponding minimal symplectic filling $W$ is obtained from the minimal resolution $\widetilde{M}$ by replacing disjoint subgraphs in the resolution graph $\Gamma$ with their minimal symplectic symplectic filling. By reindexing if needed, we assume that the first and the second arm of the configurations in Figure~\ref{startingposition} becomes 
the first and the second arm of $K$ in $C$, respectively,
and the proper transform of $e_2$ is not an irreducible component of $K$. 
Since we do not use $e_2$ during the blowing-ups, we can get the first and the second arm of $K$, so that the homological data for the irreducible components in these arms agrees
with that of $C$, from the configurations in Figure~\ref{startingposition} leaving the third single $(-1)$ arm unchanged.
Hence we arrange the order of blowing-ups from a configuration in Figure~\ref{startingposition} to $C$ so that we have an intermediate configuration $C'$ of strands containing $K'\leq K$ as in Figure~\ref{subcap}.
Note that the degree of strands in $C' \setminus K'$ is all $-1$.
\begin{figure}[h]
\begin{center}
\begin{tikzpicture}[scale=1.2]
\draw(0,0)--(5,0);
\draw (0.5,0.2)--(0,-0.7);
\draw (1.5,0.2)--(1,-0.7);
\draw (2.5,0.2)--(2,-0.7);
\draw (3.5,0.2)--(3,-0.7);
\draw (4.5,0.2)--(4,-0.7);
\node at (4,0.1) {$\cdots$};

	\draw [decorate,decoration={brace,amplitude=5pt},xshift=0pt,yshift=3pt]
	(2.5,0.2) -- (4.5,0.2) node [black,midway,yshift=8pt] 
	{\footnotesize $b-3$};

\draw (0,-0.3)--(0.5,-1.2);
\draw (1,-0.3)--(1.5,-1.2);

\node at (0.25, -1.5) {$\vdots$};
\node at (1.25, -1.5) {$\vdots$};

\draw (0,-2)--(0.5,-2.9);
\draw (1,-2)--(1.5,-2.9);

\node[left] at (0,0) {$+1$};
\node[right] at (0.25, -0.25) {$-a_{11}$};
\node[right] at (1.25, -0.25) {$-a_{21}$};

\node[right] at (2.25, -0.25) {$-1$};
\node[right] at (3.25, -0.25) {$-1$};
\node[right] at (4.25, -0.25) {$-1$};

\node[right] at (0.25, -0.75) {$-a_{12}$};
\node[right] at (1.25, -0.75) {$-a_{22}$};

\node[right] at (0.25, -2.45) {$-a_{1n_1}$};
\node[right] at (1.25, -2.45) {$-a_{2n_2}$};

\end{tikzpicture}
\end{center}
\caption{Concave cap $K'$ for linear subgraph of $\Gamma$}
\label{subcap}
\end{figure}
If we choose a linear plumbing graph $L'=$
\begin{tikzpicture}[scale=0.7]

\node[bullet] at (1,0) {};
\node[bullet] at (2.5,0) {};
\node[bullet] at (3.5,0) {};
\node[bullet] at (4.5,0) {};
\node[bullet] at (6,0) {};
\draw[dotted] (1.5,0)--(2,0);
\draw (1,0)--(1.5,0);
\draw (2,0)--(2.5,0);
\draw (2.5,0)--(5,0);
\draw (5.5,0)--(6,0);
\draw[dotted] (5,0)--(5.5,0);
\node[above] at (1,0) {$-b_{1r_1}$};
\node[above] at (2.5,0) {$-b_{11}$};
\node[above] at (3.5,0) {$-b$};
\node[above] at (4.5,0) {$-b_{21}$};
\node[above] at (6,0) {$-b_{2r_2}$};
\end{tikzpicture} 
, a subgraph of $\Gamma$ as a two-legged plumbing graph with the $(-b)$ central vertex, then $K'$ gives a concave cap of $L'$ and $C'$ is a curve configuration for a minimal symplectic filling $W_{L'}$ of $L'$.

Let $C_1$ be a curve configuration obtained by standard blowing-ups from $C'$. Then, by Lemma~\ref{fundamentallem2}, the curve configuration $C_1$ corresponds to a minimal symplectic filling $W_1$, which is obtained from the minimal resolution $\widetilde{M}$ by replacing $L'$ with $W_{L'}$. Furthermore, since $[a_{31}, a_{32}, \dots, a_{3n_3}]=[2, \dots, 2, c_1+1, c_2,  \dots, c_k]$, where $[c_1, c_2, \dots, c_k]$ is the dual of $[b_{32}, b_{33}, \dots, b_{3r_3}]$, by Lemma~\ref{fundamentallem} with $L$ as a linear chain determined by $[b_{32}, b_{33}, \dots, b_{3r_3}]$,  we conclude that the minimal symplectic filling $W$ corresponding to $C$ is obtained from $W_1$ by replacing $L$ with its minimal symplectic filling.
Hence the desired minimal symplectic filling $W$ is obtained from $\widetilde{M}$ by replacing disjoint linear subgraphs  
\begin{tikzpicture}[scale=0.8]

\node[bullet] at (1,0) {};
\node[bullet] at (2.5,0) {};
\node[bullet] at (3.5,0) {};
\node[bullet] at (4.5,0) {};
\node[bullet] at (6,0) {};
\draw[dotted] (1.5,0)--(2,0);
\draw (1,0)--(1.5,0);
\draw (2,0)--(2.5,0);
\draw (2.5,0)--(5,0);
\draw (5.5,0)--(6,0);
\draw[dotted] (5,0)--(5.5,0);
\node[above] at (1,0) {$-b_{1r_1}$};
\node[above] at (2.5,0) {$-b_{11}$};
\node[above] at (3.5,0) {$-b$};
\node[above] at (4.5,0) {$-b_{21}$};
\node[above] at (6,0) {$-b_{2r_2}$};
\end{tikzpicture} and \begin{tikzpicture}
\node[bullet] at (0,0) {};
\node[bullet] at (1,0) {};
\node[bullet] at (2.5,0) {};
\draw[dotted] (1.5,0)--(2,0);
\draw (0,0)--(1.5,0);
\draw (2,0)--(2.5,0);
\node[above] at (0,0) {$-b_{32}$};
\node[above] at (1,0) {$-b_{33}$};
\node[above] at (2.5,0) {$-b_{3r_3}$};
\end{tikzpicture}\!\! of $\Gamma$ with their minimal symplectic fillings,
so that there is a sequence of rational blowdowns from $\widetilde{M}$ to $W$.\\

The following example illustrates the curve configurations of type B

\begin{example}
We again consider a small Seifert $3$-manifold $Y$ used in Example~\ref{example1}. Since the left-hand configuration without exceptional $2$-spheres in Figure~\ref{middleposition} gives a concave cap of a lens space determined by a subgraph \begin{tikzpicture}[scale=0.8]
\node[bullet] at (2,0){};
\node[bullet] at (3,0){};
\node[bullet] at (4,0){};
\node[bullet] at (5,0){};

\node[above] at (2,0){$-3$};
\node[above] at (3,0){$-2$};
\node[above] at (4,0){$-5$};
\node[above] at (5,0){$-2$};

\draw (2,0)--(5,0);
\end{tikzpicture} of $\Gamma$, it gives a minimal symplectic filling $W_L$ of the lens space $L(39,16)$.
Then, by blowing-ups at points lying on the third arm different from the intersection point with the exceptional curve $e$, we get an embedding of a concave cap $K$ of $Y$ as in the right-hand curve configuration $C_1$ of Figure~\ref{middleposition}, which gives a minimal symplectic filling $W_1$ of $Y$. Furthermore, since there is a unique minimal symplectic filling of lens space $L(2,1)$ corresponding to the $(-2)$ $2$-sphere in the third arm of $\Gamma$, $W_1$ is obtained from the minimal symplectic filling $W_L$. 
In fact, there are three more minimal symplectic fillings of $Y$ which are of Case B type - See Figure~\ref{3.1type} for the corresponding curve configurations. Note that the curve configuration $C_1$ for $W_1$ in Figure~\ref{middleposition} comes from the right-hand configuration in Figure~\ref{startingposition} and the curve $c$ becomes a component of the first arm of $K$ in Figure~\ref{b5plumbing}. Similarly, the curve configuration $C_i$ for $W_i$ $(2\leq i \leq 4)$ is also obtained from the right-hand configuration in Figure~\ref{startingposition}. One can easily check that each $W_i$ is obtained from the minimal resolution of $Y$ by a linear rational blowdown surgery: Explicitly $W_2$, $W_3$ and $W_4$ are obtained by rationally blowing-down along subgraphs \begin{tikzpicture}[scale=0.8]
\node[bullet] at (2,0){};
\node[bullet] at (3,0){};

\node[above] at (2,0){$-2$};
\node[above] at (3,0){$-5$};

\draw (2,0)--(3,0);
\end{tikzpicture}, \begin{tikzpicture}[scale=0.8]
\node[bullet] at (2,0){};
\node[bullet] at (3,0){};

\node[above] at (2,0){$-5$};
\node[above] at (3,0){$-2$};

\draw (2,0)--(3,0);
\end{tikzpicture} and \begin{tikzpicture}[scale=0.8]
\node[bullet] at (2,0){};
\node[bullet] at (3,0){};
\node[bullet] at (4,0){};
\node[bullet] at (5,0){};
\node[bullet] at (6,0){};

\node[above] at (2,0){$-3$};
\node[above] at (3,0){$-2$};
\node[above] at (4,0){$-5$};
\node[above] at (5,0){$-4$};
\node[above] at (6,0){$-2$};

\draw (2,0)--(6,0);
\end{tikzpicture} in $\Gamma$ respectively. And $W_1$ is also obtained by rationally blowing-down along \begin{tikzpicture}[scale=0.8]
\node[bullet] at (2,0){};
\node[bullet] at (3,0){};
\node[bullet] at (4,0){};

\node[above] at (2,0){$-3$};
\node[above] at (3,0){$-5$};
\node[above] at (4,0){$-2$};

\draw (2,0)--(4,0);
\end{tikzpicture}, which is embedded in another plumbing \begin{tikzpicture}[scale=0.8]
\node[bullet] at (2,0){};
\node[bullet] at (3,0){};
\node[bullet] at (4,0){};
\node[bullet] at (5,0){};

\node[above] at (2,0){$-3$};
\node[above] at (3,0){$-2$};
\node[above] at (4,0){$-5$};
\node[above] at (5,0){$-2$};

\draw (2,0)--(5,0);
\end{tikzpicture}.


\begin{figure}[h]
\begin{tikzpicture}[scale=0.9]
\begin{scope}
\draw (0,0.2)--(2.8,0.2);
\draw (0.4,0.5)--(0.4,-2.4);

\draw (1.4,0.5)--(1.4,-0.6);
\draw (1.9,0.5)--(1.9,-1.3);
\draw (2.4,0.5)--(2.4,-2.4);

\draw (0.3,-0.4)--(1,-0.8);
\draw[red,thick, dashdotted] (0.8,-0.8)--(1.5,-0.4);

\draw[red,thick, dashdotted] (0.2,0)--(0.6,0);
\draw[red,thick, dashdotted] (1.2,0)--(2.6,0);

\draw[red,thick, dashdotted] (0.2,-1.1)--(2.1,-1.1);
\node[left,red] at (0.2,-1.1){$e$};
\draw[red,thick, dashdotted] (0.2,-2.2)--(2.6,-2.2);

\node[above] at (1.9,0.5) {$-1$};
\node[above] at (2.4,0.5) {$-1$};
\node[left] at (0,0.2) {$+1$};

\draw[very thick, ->, >=stealth] (3.3,-0.85)--(4.3,-0.85);

\end{scope}
\begin{scope}[shift={(5,0)}]
\draw (0,0.2)--(2.8,0.2);
\draw (0.4,0.5)--(0.4,-2.4);

\draw (1.4,0.5)--(1.4,-0.6);

\draw (1.9,0.5)--(1.9,-1.5);
\draw (1.8,-1.2)--(2.1,-1.7);
\draw(2.1,-1.6)--(1.8,-2.1);
\draw[red,thick, dashdotted] (1.85,-1.8)--(2.05,-1.8);
\draw[red,thick, dashdotted] (1.8,-1.9)--(2.0,-1.9);

\draw (2.4,0.5)--(2.4,-2.4);

\draw (0.3,-0.4)--(1,-0.8);
\draw[red,thick, dashdotted] (0.8,-0.8)--(1.5,-0.4);

\draw[red,thick, dashdotted] (0.2,0)--(0.6,0);
\draw[red,thick, dashdotted] (1.2,0)--(2.6,0);

\draw[red,thick, dashdotted] (0.2,-1.1)--(2.1,-1.1);
\draw[red,thick, dashdotted] (0.2,-2.2)--(2.6,-2.2);

\node[above] at (2.4,0.5) {$-1$};
\node[left] at (0,0.2) {$+1$};

\end{scope}
\end{tikzpicture}
\caption{Curve configuration $C_1$ for $W_1$}
\label{middleposition}
\end{figure}
\begin{figure}[h]
\begin{tikzpicture}
\begin{scope}
\node at (1.4,-2.8){$C_2$};
\draw (0,0.3)--(2.8,0.3);
\draw (0.4,0.5)--(0.4,-2.4);

\draw (1.4,0.5)--(1.4,-0.6);

\draw (1.9,0.5)--(1.9,-1.5);
\draw (1.8,-1.2)--(2.1,-1.7);
\draw(2.1,-1.6)--(1.8,-2.1);
\draw[red,thick, dashdotted] (1.75,-1.8)--(2.15,-1.8);
\draw[red,thick, dashdotted] (1.7,-1.9)--(2.1,-1.9);

\draw (2.4,0.5)--(2.4,-2.4);

\draw (0.3,-0.5)--(1,-0.9);
\draw[red,thick, dashdotted](0.7,-0.95)--(.9,-0.55);

\draw[red,thick, dashdotted] (0.2,-0.2)--(1.6,-0.2);
\draw[red,thick, dashdotted] (1.2,0)--(2.6,0);
\draw[red,thick, dashdotted] (1.2,-0.4)--(1.6,-0.4);
\draw[red,thick, dashdotted] (0.2,-1.1)--(2.1,-1.1);
\draw[red,thick, dashdotted] (0.2,-2.2)--(2.6,-2.2);

\node[above] at (2.4,0.5) {$-1$};
\node[left] at (0,0.3) {$+1$};

\end{scope}
\begin{scope}[shift={(4,0)}]
\node at (1.4,-2.8){$C_3$};
\draw (0,0.3)--(2.8,0.3);
\draw (0.4,0.5)--(0.4,-2.4);

\draw (1.4,0.5)--(1.4,-0.7);

\draw (1.9,0.5)--(1.9,-1.5);
\draw (1.8,-1.2)--(2.1,-1.7);
\draw(2.1,-1.6)--(1.8,-2.1);
\draw[red,thick, dashdotted] (1.75,-1.8)--(2.15,-1.8);
\draw[red,thick, dashdotted] (1.7,-1.9)--(2.1,-1.9);

\draw (2.4,0.5)--(2.4,-2.4);
\draw (1.3,-0.5)--(1.7,-1.);
\draw[red,thick, dashdotted] (1.35,-.85)--(1.75,-.85);

\draw[red,thick, dashdotted] (1.2,-0.3)--(1.6,-0.3);

\draw[red,thick, dashdotted] (1.2,0.1)--(2.6,0.1);
\draw[red,thick, dashdotted] (.2,-0.1)--(1.6,-0.1);
\draw[red,thick, dashdotted] (0.2,-1.1)--(2.1,-1.1);
\draw[red,thick, dashdotted] (0.2,-2.2)--(2.6,-2.2);

\node[above] at (2.4,0.5) {$-1$};
\node[left] at (0,0.3) {$+1$};

\end{scope}
\begin{scope}[shift={(8,0)}]
\node at (1.4,-2.8){$C_4$};
\draw (0,0.3)--(2.8,0.3);
\draw (0.4,0.5)--(0.4,-2.4);

\draw (1.6,0.5)--(1.6,-0.6);

\draw (2.,0.5)--(2.,-2.0);

\draw (2.4,0.5)--(2.4,-2.4);

\draw (0.3,-0.4)--(0.8,-0.8);
\draw (1.1,-0.4)--(0.6,-0.8);
\draw[red,thick, dashdotted] (0.9,-0.4)--(1.4,-0.8);
\draw (1.2,-0.8)--(1.7,-0.4);


\draw[red,thick, dashdotted] (1.4,0.1)--(2.6,0.1);

\draw[red,thick, dashdotted] (1.8,-1.4)--(2.2,-1.4);

\draw[red,thick, dashdotted] (0.2,-1.8)--(2.2,-1.8);
\draw[red,thick, dashdotted] (0.2,-2.2)--(2.6,-2.2);

\node[above] at (2.4,0.5) {$-1$};
\node[left] at (0,0.3) {$+1$};

\end{scope}

\end{tikzpicture}
\caption{Curve configurations for other symplectic fillings of $Y$}
\label{3.1type}
\end{figure}
\label{example2}
\end{example}\smallskip

\subsection{Proof for type C}
For a minimal symplectic filling $W$ corresponding to a curve configuration $C$ of type C, we want to find a curve configuration $C_1$ of type B such that there is a symplectically embedded linear chain $L$ of $2$-spheres (that is not visible in $\Gamma$) in $W_1$ corresponding to $C_1$  so that $W$ is obtained from $W_1$ by replacing $L$ with its minimal symplectic filling $W_L$.


\begin{figure}[h]
\begin{center}
\begin{tikzpicture}[xscale=1.1,yscale=.75]
\begin{scope}
\draw(0,0)--(3.5,0);
\draw (0.5,0.2)--(0,-0.7);
\draw (1.8,0.2)--(1.3,-0.7);
\draw (3.2,0.2)--(2.7,-2);

\draw (0,-0.3)--(0.5,-1.2);
\draw (1.3,-0.3)--(1.8,-1.2);

\node at (0.25, -1.2) {$\vdots$};

\draw (0.5,-1.6)--(0.,-2.5);

\draw (0.,-2.)--(0.5,-2.9);

\draw (0.5,-3.3)--(0,-4.2);

\draw[red,thick, dashdotted] (.2,-1.7)--(1.35,-1.74);
\draw[red,thick, dashdotted] (3,-1.8)--(1.75,-1.76);
\node[red,below] at (3,-1.8){$e_2$};

\draw (1.8,-3.3)--(1.3,-4.2);

\draw[red,thick, dashdotted] (0.2,-3.4)--(1.8,-3.7);
\node[red,below] at (1.8,-3.7){$e$};
\draw (0,-3.8)--(0.5,-4.7);
\draw (1.3,-3.8)--(1.8,-4.7);

\node at (0.25, -4.7) {$\vdots$};
\node at (1.55, -4.7) {$\vdots$};

\draw (0,-5.1)--(0.5,-6);
\draw (1.3,-5.1)--(1.8,-6);

\filldraw (1.55,-1.65) circle (0.25pt);
\filldraw (1.55,-1.75) circle (0.25pt);
\filldraw (1.55,-1.85) circle (0.25pt);
\filldraw (1.55,-1.95) circle (0.25pt);
\filldraw (1.55,-2.05) circle (0.25pt);
\filldraw (1.55,-2.15) circle (0.25pt);
\filldraw (1.55,-2.25) circle (0.25pt);
\filldraw (1.55,-2.35) circle (0.25pt);
\filldraw (1.55,-2.45) circle (0.25pt);
\filldraw (1.55,-2.55) circle (0.25pt);
\filldraw (1.55,-2.65) circle (0.25pt);
\filldraw (1.55,-2.75) circle (0.25pt);



\node at (0.25, -2.9) {$\vdots$};

\node[left] at (0,0) {$+1$};
\node[right] at (0.35, -0.25) {$-a_{11}$};
\node[right] at (1.65, -0.25) {$-a_{21}$};
\node[right] at (3.1, -0.35) {$-1$};

\node[right] at (0.35, -0.95) {$-a_{12}$};
\node[right] at (1.65, -0.95) {$-a_{22}$};

\node[right] at (0.35, -2.15) {$-a_{1n}'$};
\node[left] at (0.25, -1.65) {$C^1_{n}$};


\node[right] at (0.25, -5.45) {$-a_{1n_1}$};
\node[right] at (1.55, -5.45) {$-a_{2n_2}$};

\end{scope}
\begin{scope}[shift={(5.2,0)}]
\draw(0,0)--(3.5,0);
\draw (0.5,0.2)--(0,-0.7);
\draw (1.8,0.2)--(1.3,-0.7);
\draw (3.2,0.2)--(2.7,-2.5);

\draw[red,thick, dashdotted](3,-2.4)--(1.75,-1.44);
\draw[red,thick, dashdotted](1.35,-1.15)--(0.1,-0.2);
\node[red,below] at (3,-2.4){$e_2$};

\draw (0,-0.3)--(0.5,-1.2);
\draw (1.3,-0.3)--(1.8,-1.2);

\node at (0.25, -1.3) {$\vdots$};
\node at (1.55, -1.3) {$\vdots$};

\draw (0.5,-2)--(0,-2.9);
\draw (1.8,-2)--(1.3,-2.9);

\draw (0.,-2.5)--(0.5,-3.4);
\draw (1.3,-2.5)--(1.8,-3.4);

\draw[red,thick, dashdotted] (0.2,-2.1)--(1.8,-2.4);
\node[red,below] at (1.8,-2.4){$e$};

\node[left] at (0,0) {$+1$};
\node[right] at (0.35, -0.25) {$-a_{11}'$};
\node[right] at (1.65, -0.25) {$-a_{21}$};
\node[right] at (3.1, -0.35) {$-1$};

\node[right] at (1.65, -0.95) {$-a_{22}$};

\draw (0,-5.1)--(0.5,-6);
\draw (1.3,-5.1)--(1.8,-6);

\node[right] at (0.25, -5.45) {$-a_{1n_1}$};
\node[right] at (1.55, -5.45) {$-a_{2n_2}$};

\filldraw (0.25,-3.7) circle (0.25pt);
\filldraw (0.25,-3.8) circle (0.25pt);
\filldraw (0.25,-3.9) circle (0.25pt);
\filldraw (0.25,-4) circle (0.25pt);
\filldraw (0.25,-4.1) circle (0.25pt);
\filldraw (0.25,-4.2) circle (0.25pt);
\filldraw (0.25,-4.3) circle (0.25pt);
\filldraw (0.25,-4.4) circle (0.25pt);
\filldraw (0.25,-4.5) circle (0.25pt);
\filldraw (0.25,-4.6) circle (0.25pt);

\filldraw (1.55,-3.7) circle (0.25pt);
\filldraw (1.55,-3.8) circle (0.25pt);
\filldraw (1.55,-3.9) circle (0.25pt);
\filldraw (1.55,-4) circle (0.25pt);
\filldraw (1.55,-4.1) circle (0.25pt);
\filldraw (1.55,-4.2) circle (0.25pt);
\filldraw (1.55,-4.3) circle (0.25pt);
\filldraw (1.55,-4.4) circle (0.25pt);
\filldraw (1.55,-4.5) circle (0.25pt);
\filldraw (1.55,-4.6) circle (0.25pt);


\end{scope}
\end{tikzpicture}
\caption{Part of intermediate configuration $C'$}
\label{case2c'}
\end{center}
\end{figure}

By reindexing if needed, we may assume that the first and the second arm of configurations in Figure~\ref{startingposition} become that of $K$ respectively, and the proper transform of $e_2$ becomes an irreducible component in the third arm of $K$ after blowing-ups. For convenience, we omit all exceptional $(-1)$ strands that intersect only one irreducible component of the corresponding concave cap $K$ in figures.

Now, by blowing-ups at intersection points of $e_1$ consecutively, we get the first and the second arm so that the homological data for the irreducible components in these arms agrees with that of $C$ except for one irreducible component, say $C^1_n$, of the first arm of $K$ leaving the third single $(-1)$ arm unchanged. Note that there is only one exceptional strand $e$ connecting the first and the second arm as in Figure~\ref{case2c'} because we blow up at intersection points of $e_1$ to get the first and the second arm of $K$. Hence we can arrange a sequence of blowing-ups from a configuration in Figure~\ref{startingposition} to a curve configuration $C$ of type C so that we have an intermediate configuration $C'$ of strands as in Figure~\ref{case2c'}: The left-hand/right-hand figure is coming from $(a)'$/$(b)$ in Figure~\ref{startingposition} respectively.  For simplicity, we only explain a curve configuration coming from $(a)'$ in Figure~\ref{startingposition}. On contrary to the type B  case, we have a $(-a_{1n}')$ strand with $a_{1n}>a_{1n}'$ in $C'$ because we need to blow up at the intersection point of $e_2$ and $c$ in Figure~\ref{startingposition}, which becomes $(-a_{1n})$ strand in the curve configuration $C$ in Figure~\ref{case2c}. 
\begin{figure}[h]
\begin{center}
\begin{tikzpicture}[xscale=1.1,yscale=.8]
\begin{scope}
\draw(0,0)--(3,0);
\draw (0.5,0.2)--(0,-0.7);
\draw (1.5,0.2)--(1,-0.7);
\draw (2.5,0.2)--(2,-0.7);

\draw (0,-0.3)--(0.5,-1.2);
\draw (1,-0.3)--(1.5,-1.2);
\draw (2,-0.3)--(2.5,-1.2);

\node at (0.25, -1.2) {$\vdots$};
\node at (2.25, -1.2) {$\vdots$};

\draw (0.5,-1.6)--(0.,-2.5);
\draw (2.5,-1.6)--(2.,-2.5);

\draw (0.,-2.)--(0.5,-2.9);
\draw (2.,-2.)--(2.5,-2.9);

\draw (0.5,-3.3)--(0,-4.2);
\draw (1.5,-3.3)--(1,-4.2);

\draw[red,thick, dashdotted] (0.2,-3.4)--(1.5,-3.7);
\node[below,red] at (1.5, -3.7) {$e$};

\draw (0,-3.8)--(0.5,-4.7);
\draw (1,-3.8)--(1.5,-4.7);

\node at (0.25, -4.7) {$\vdots$};
\node at (1.25, -4.7) {$\vdots$};

\draw (0,-5.1)--(0.5,-6);
\draw (1,-5.1)--(1.5,-6);
\draw (2,-5.1)--(2.5,-6);

\filldraw (1.25,-1.65) circle (0.25pt);
\filldraw (1.25,-1.75) circle (0.25pt);
\filldraw (1.25,-1.85) circle (0.25pt);
\filldraw (1.25,-1.95) circle (0.25pt);
\filldraw (1.25,-2.05) circle (0.25pt);
\filldraw (1.25,-2.15) circle (0.25pt);
\filldraw (1.25,-2.25) circle (0.25pt);
\filldraw (1.25,-2.35) circle (0.25pt);
\filldraw (1.25,-2.45) circle (0.25pt);
\filldraw (1.25,-2.55) circle (0.25pt);
\filldraw (1.25,-2.65) circle (0.25pt);
\filldraw (1.25,-2.75) circle (0.25pt);

\filldraw (2.25,-3.6) circle (0.25pt);
\filldraw (2.25,-3.5) circle (0.25pt);

\filldraw (2.25,-3.7) circle (0.25pt);
\filldraw (2.25,-3.8) circle (0.25pt);
\filldraw (2.25,-3.9) circle (0.25pt);
\filldraw (2.25,-4) circle (0.25pt);
\filldraw (2.25,-4.1) circle (0.25pt);
\filldraw (2.25,-4.2) circle (0.25pt);
\filldraw (2.25,-4.3) circle (0.25pt);
\filldraw (2.25,-4.4) circle (0.25pt);
\filldraw (2.25,-4.5) circle (0.25pt);
\filldraw (2.25,-4.6) circle (0.25pt);

\node at (0.25, -2.9) {$\vdots$};

\node[left] at (0,0) {$+1$};
\node[right] at (0.25, -0.25) {$-a_{11}$};
\node[right] at (1.25, -0.25) {$-a_{21}$};
\node[right] at (2.25, -0.25) {$-a_{31}$};

\node[right] at (0.25, -0.9) {$-a_{12}$};
\node[right] at (1.25, -0.9) {$-a_{22}$};
\node[right] at (2.25, -0.9) {$-a_{32}$};

\draw[red,thick, dashdotted] (0.2,-1.8)--(1.05,-1.8);
\draw[red,thick, dashdotted] (1.45,-1.8)--(2.55,-1.8);
\node[left] at (0.25, -1.7) {$C^1_{n}$};

\node[right] at (0.25, -2.15) {$-a_{1n}$};

\node[right] at (0.25, -5.45) {$-a_{1n_1}$};
\node[right] at (1.25, -5.45) {$-a_{2n_2}$};
\node[right] at (2.25, -5.45) {$-a_{3n_3}$};

\end{scope}
\begin{scope}[shift={(5,0)}]
\draw(0,0)--(3,0);
\draw (0.5,0.2)--(0,-0.7);
\draw (1.5,0.2)--(1,-0.7);
\draw (2.5,0.2)--(2,-0.7);

\draw[red,thick, dashdotted] (0.1,-0.2)--(1.05,-1.1);
\draw[red,thick, dashdotted] (1.45,-1.5)--(2.5,-2.5);

\draw (0,-0.3)--(0.5,-1.2);
\draw (1,-0.3)--(1.5,-1.2);
\draw (2,-0.3)--(2.5,-1.2);

\node at (0.25, -1.3) {$\vdots$};
\node at (1.25, -1.3) {$\vdots$};
\node at (2.25, -1.3) {$\vdots$};

\draw (0.5,-2)--(0,-2.9);

\draw (1.5,-2)--(1.,-2.9);
\draw (2.5,-2)--(2.,-2.9);

\draw (0.,-2.5)--(0.5,-3.4);
\draw (1.,-2.5)--(1.5,-3.4);
\draw (2.,-2.5)--(2.5,-3.4);

\draw[red,thick, dashdotted] (0.2,-2.1)--(1.5,-2.4);
\node[below,red] at (1.5, -2.4) {$e$};

\node[left] at (0,0) {$+1$};
\node[right] at (0.25, -0.25) {$-a_{11}$};
\node[right] at (1.25, -0.25) {$-a_{21}$};
\node[right] at (2.25, -0.25) {$-a_{31}$};

\node[right] at (1.25, -0.9) {$-a_{22}$};
\node[right] at (2.25, -0.9) {$-a_{32}$};

\draw (0,-5.1)--(0.5,-6);
\draw (1,-5.1)--(1.5,-6);
\draw (2,-5.1)--(2.5,-6);

\node[right] at (0.25, -5.45) {$-a_{1n_1}$};
\node[right] at (1.25, -5.45) {$-a_{2n_2}$};
\node[right] at (2.25, -5.45) {$-a_{3n_3}$};

\filldraw (0.25,-3.7) circle (0.25pt);
\filldraw (0.25,-3.8) circle (0.25pt);
\filldraw (0.25,-3.9) circle (0.25pt);
\filldraw (0.25,-4) circle (0.25pt);
\filldraw (0.25,-4.1) circle (0.25pt);
\filldraw (0.25,-4.2) circle (0.25pt);
\filldraw (0.25,-4.3) circle (0.25pt);
\filldraw (0.25,-4.4) circle (0.25pt);
\filldraw (0.25,-4.5) circle (0.25pt);
\filldraw (0.25,-4.6) circle (0.25pt);

\filldraw (1.25,-3.7) circle (0.25pt);
\filldraw (1.25,-3.8) circle (0.25pt);
\filldraw (1.25,-3.9) circle (0.25pt);
\filldraw (1.25,-4) circle (0.25pt);
\filldraw (1.25,-4.1) circle (0.25pt);
\filldraw (1.25,-4.2) circle (0.25pt);
\filldraw (1.25,-4.3) circle (0.25pt);
\filldraw (1.25,-4.4) circle (0.25pt);
\filldraw (1.25,-4.5) circle (0.25pt);
\filldraw (1.25,-4.6) circle (0.25pt);

\filldraw (2.25,-3.7) circle (0.25pt);
\filldraw (2.25,-3.8) circle (0.25pt);
\filldraw (2.25,-3.9) circle (0.25pt);
\filldraw (2.25,-4) circle (0.25pt);
\filldraw (2.25,-4.1) circle (0.25pt);
\filldraw (2.25,-4.2) circle (0.25pt);
\filldraw (2.25,-4.3) circle (0.25pt);
\filldraw (2.25,-4.4) circle (0.25pt);
\filldraw (2.25,-4.5) circle (0.25pt);
\filldraw (2.25,-4.6) circle (0.25pt);


\end{scope}
\end{tikzpicture}
\caption{Part of curve configuration $C$ for $W$}
\label{case2c}
\end{center}
\end{figure}

Let $C_1$ be a curve configuration obtained from $C'$ by standard blowing-ups and $W_1$ be a minimal symplectic filling of $Y$ corresponding to $C_1$. 
Then, by Lemma~\ref{fundamentallem3}, there is a symplectic embedding $L$ in $W_1$ so that $W$ is obtained from $W_1$ by replacing $L$ with its minimal symplectic filling $W_L$, where $L$ is a plumbing graph in Figure~\ref{detail1}. Since a curve configuration $C_1$ for $W_1$ is of type B, there is a sequence of rational blowdowns from $\widetilde{M}$ to $W$ as desired.

\begin{figure}[htbp]
\begin{tikzpicture}[scale=0.9]
\begin{scope}[scale=0.6]
\node[bullet] at (0,1.5){};

\node[bullet] at (-2,0){};
\node[bullet] at (2,0){};

\node[bullet] at (-2,-1){};
\node[bullet] at (2,-1){};

\node[bullet] at (-2,-3){};
\node[bullet] at (2,-3){};

\draw (0,1.5)--(2,0)--(2,-1.5);
\draw (0,1.5)--(-2,0)--(-2,-1.5);

\draw[dotted](2,-1.5)--(2,-2.5);
\draw[dotted](-2,-1.5)--(-2,-2.5);

\draw(2,-2.5)--(2,-3);
\draw(-2,-2.5)--(-2,-3);
\draw (0,1.5) node[above] {$-b_{31}-1$};

\draw (-2,0) node[right] {$-2$};
\draw (-2,-1) node[right] {$-2$};
\draw (-2,-3) node[right] {$-2$};
\draw (2,0) node[right] {$-b_{32}$};
\draw (2,-1) node[right] {$-b_{33}$};
\draw (2,-3) node[right] {$-b_{3r_3}$};
	\draw [decorate,decoration={brace,mirror, amplitude=5pt},xshift=-10pt,yshift=0pt]
	(-2,0) -- (-2,-3) node [black,midway,xshift=-37pt] 
	{$a_{1n}-a_{1n}'-1$};
\end{scope}

\end{tikzpicture}
\caption{A plumbing graph of $L$}
\label{detail1}
\end{figure}

The following example illustrates this case.

\begin{example}
We consider a minimal symplectic filling $W_5$ of $Y$ in Example~\ref{example1}, represented by a curve configuration $C_5$ in Figure~\ref{finalposition}. The curve configuration $C_5$ is obtained from the right-hand configuration in Figure~\ref{startingposition}, and the proper transforms of $e_1$ and $e_2$ are irreducible components of the concave cap $K$. Thus, as in the proof, we can find intermediate configuration $C'$ between the right-hand configuration in Figure~\ref{startingposition} and $C_5$. Then it is easy to check that the homological data for standard blowing-ups $\widetilde{C'}$ of $C'$ and that of $C_1$ is equal (See Figure~\ref{middleposition} and Figure~\ref{finalposition}).
\begin{figure}[h]
\begin{tikzpicture}[scale=0.95]
\begin{scope}
\draw (0,0.2)--(2.8,0.2);
\draw (0.4,0.5)--(0.4,-2.4);

\draw (1.4,0.5)--(1.4,-0.6);
\draw (1.9,0.5)--(1.9,-1.3);
\draw (2.4,0.5)--(2.4,-2.4);

\draw (0.3,-0.4)--(1,-0.8);
\draw[red,thick, dashdotted] (0.8,-0.8)--(1.5,-0.4);

\draw[red,thick, dashdotted] (1.2,0)--(2.6,0);

\draw[red,thick, dashdotted] (0.2,-1.1)--(2.1,-1.1);
\node[left,red] at (0.2,-1.1){$e$};
\draw[red,thick, dashdotted] (0.2,-2.2)--(2.6,-2.2);

\node[above] at (1.9,0.5) {$-1$};
\node[above] at (2.4,0.5) {$-1$};
\node[left] at (0,0.2) {$+1$};
\node at (1.4,-2.75) {$C'$};

\draw[very thick, ->, >=stealth] (3.3,-0.85)--(4.3,-0.85);

\end{scope}
\begin{scope}[shift={(5,0)}]
\draw (0,0.2)--(2.8,0.2);
\draw (0.4,0.5)--(0.4,-2.4);

\draw (1.4,0.5)--(1.4,-0.6);
\node[right] at (1.35,-0.2) {$C^2_1$};

\draw (1.9,0.5)--(1.9,-1.3);
\node[right] at (1.85,-0.5) {$C^3_1$};

\draw (2.4,0.5)--(2.4,-2.4);
\node[right] at (2.45,-0.5) {$C^4_1$};

\draw (0.3,-0.4)--(1,-0.8);
\node[above] at (0.7,-0.6) {$C^1_2$};

\draw[red,thick, dashdotted] (0.8,-0.8)--(1.5,-0.4);

\draw[red,thick, dashdotted] (1.2,0.075)--(2.6,0.075);

\draw[red,thick, dashdotted] (0.2,-2.2)--(2.6,-2.2);

\node[above] at (2.4,0.5) {$-1$};
\node[left] at (0,0.2) {$+1$};
\node[left] at (0.4,-.75) {$C^1_1$};

\draw (2,-1.1)--(1.3,-1.6);
\node[below] at (2,-1.3) {$C^3_2$};

\draw[red,thick, dashdotted] (0.3,-1.8)--(1,-1.3);
\draw(0.8,-1.3)--(1.5,-1.6);
\node[below] at (1,-1.5) {$C^3_3$};

\draw[red,thick, dashdotted] (1.75,-1.45)--(1.55,-1.25);
\node at (1.4,-2.75) {$C_5$};

\end{scope}
\end{tikzpicture}
\caption{Curve configuration $C_5$ for symplectic filling $W_5$ of $Y$}
\label{finalposition}
\end{figure}
From the proof of Lemma~\ref{fundamentallem3}, we can explicitly check that there is a symplectic embedding of $L_1$:
\begin{tikzpicture}[scale=0.8]
\node[bullet] at (2,0){};
\node[bullet] at (3,0){};

\node[above] at (2,0){$-5$};
\node[above] at (3,0){$-2$};

\draw (2,0)--(3,0);
\end{tikzpicture} to $W_1$ in Example~\ref{example2}, and $W_5$ is obtained by rationally blowing down it:
Let $C_i^j$ be an $i^{\text{th}}$ component of the $j^{\text{th}}$ arm in $K$. Then, the homological data of $K$ for $W_1$ in $X=W_1\cup K \cong\mathbb{CP}^2 \sharp 10\overline{\mathbb{CP}^2}$ is given by
\begin{align*}
[C_0]&=l\\
[C_1^1]&= l-e_2-e_3-e_4-e_5, \phantom{0} [C_2^1]= e_2-e_6\\
[C_1^2]&= l-e_1-e_2-e_6\\
[C_1^3]&= l-e_1-e_3-e_7, \phantom{0} [C_2^3]= e_7-e_8, \phantom{0} [C_3^3]= e_8-e_9-e_{10}\\
[C_1^4]&= l-e_1-e_4,
\end{align*} where $C_0$ is the central $(+1)$ $2$-sphere of $K$, $l$ is the homology class representing the complex line in $\mathbb{CP}^2$, and $e_i$ is the homology class of each exceptional $2$-sphere. As in the proof of Lemma~\ref{fundamentallem3}, we can find a symplectic embedding of $L=$ \begin{tikzpicture}[scale=0.8]
\node[bullet] at (2,0){};
\node[bullet] at (3,0){};

\node[above] at (2,0){$-5$};
\node[above] at (3,0){$-2$};

\draw (2,0)--(3,0);
\end{tikzpicture} 
to $W_1\subset X$ whose homological data is given by $e_3-e_5-e_7-e_8-e_9$ and $e_9-e_{10}$ (refer to Figure~\ref{findingL1}). 
\begin{figure}[h]
\begin{tikzpicture}[scale=1.0]
\begin{scope}
\draw (0,0.2)--(2.8,0.2);
\draw (0.4,0.5)--(0.4,-2.4);

\draw (1.4,0.5)--(1.4,-0.6);
\draw (1.9,0.5)--(1.9,-1.3);
\draw (2.4,0.5)--(2.4,-2.4);

\draw (0.3,-0.4)--(1,-0.8);
\draw[red,thick, dashdotted] (0.8,-0.8)--(1.5,-0.4);

\draw[red,thick, dashdotted] (1.2,0)--(2.6,0);

\draw[red,thick, dashdotted] (0.2,-1.1)--(2.1,-1.1);
\node[left,red] at (0.2,-1.1){$e$};
\filldraw[red] (0.4,-1.1) circle (1.5pt);
\filldraw[red] (1.9,-1.1) circle (1.5pt);

\draw[red,thick, dashdotted] (0.2,-2.2)--(2.6,-2.2);

\node[above] at (1.9,0.5) {$-1$};
\node[above] at (2.4,0.5) {$-1$};
\node[left] at (0,0.2) {$+1$};

\draw[very thick, ->, >=stealth] (2.95,-0.85)--(3.45,-0.85);

\end{scope}
\begin{scope}[shift={(3.75,0)}]
\draw (0,0.2)--(2.8,0.2);
\draw (0.4,0.5)--(0.4,-2.4);

\draw (1.4,0.5)--(1.4,-0.6);
\draw (1.9,0.5)--(1.9,-1.3);
\draw (2.4,0.5)--(2.4,-2.4);

\draw (0.3,-0.4)--(1,-0.8);

\draw[red,thick, dashdotted] (0.8,-0.8)--(1.5,-0.4);

\draw[red,thick, dashdotted] (1.2,0)--(2.6,0);

\draw (1.8,-.9)--(2.1,-1.4);
\draw(2.1,-1.3)--(1.8,-1.8);
\draw[red,thick, dashdotted] (2.,-1.8)--(1.6,-1.3);
\draw[red,thick, dashdotted] (0.7,-1.8)--(0.3,-1.3);

\draw[red,thick, dashdotted] (0.2,-2.2)--(2.6,-2.2);
\draw(0.5,-1.7)--(1.8,-1.4);
\node at (1.15,-1.8){$-5$};
\node at (2.15,-1.05){$-2$};
\node at (2.15,-1.6){$-2$};
\filldraw[red] (1.88,-1.65) circle (1.1pt);

\node[above] at (2.4,0.5) {$-1$};
\node[left] at (0,0.2) {$+1$};

\draw[very thick, ->, >=stealth] (2.95,-0.85)--(3.45,-0.85);

\end{scope}
\begin{scope}[shift={(7.5,0)}]
\draw (0,0.2)--(2.8,0.2);
\draw (0.4,0.5)--(0.4,-2.4);

\draw (1.4,0.5)--(1.4,-0.6);
\draw (1.9,0.5)--(1.9,-1.3);
\draw (2.4,0.5)--(2.4,-2.4);

\draw (0.3,-0.4)--(1,-0.8);

\draw[red,thick, dashdotted] (0.8,-0.8)--(1.5,-0.4);

\draw[red,thick, dashdotted] (1.2,0)--(2.6,0);

\draw (1.8,-.9)--(2.1,-1.4);
\draw(2.1,-1.3)--(1.8,-1.8);
\draw[red,thick, dashdotted] (2.,-1.8)--(1.6,-1.3);
\draw[red,thick, dashdotted] (0.7,-1.8)--(0.3,-1.3);

\draw[red,thick, dashdotted] (0.2,-2.2)--(2.6,-2.2);
\draw(0.5,-1.7)--(1.25,-1.3);
\draw(1.,-1.3)--(1.9,-1.5);
\node at (1,-1.7){$-5$};
\node at (1.5,-1.7){$-2$};
\node at (2.15,-1.05){$-2$};
\node at (2.15,-1.6){$-3$};

\node[above] at (2.4,0.5) {$-1$};
\node[left] at (0,0.2) {$+1$};
\end{scope}
\end{tikzpicture}
\caption{Embedding of $L_1$ in $W_1$}
\label{findingL1}
\end{figure}
There are two minimal symplectic fillings of $L$ whose corresponding curve configurations are as in Figure~\ref{lens52}. Note that the first figure represents a linear plumbing while the second figure represents a rational homology $4$-ball.
\begin{figure}[h]
\begin{tikzpicture}[scale=0.6]
\begin{scope}
\draw (0.5,0.2)--(0,-1.);
\draw (0,-0.6)--(0.5,-1.8);
\draw (0.5,-1.4)--(0,-2.6);
\draw (0,-2.2)--(0.5,-3.4);
\draw (0.5,-2.8)--(0,-4);

\draw[red,thick, dashdotted] (0.0,-1.2)--(0.5,-1.2);
\draw[red,thick, dashdotted] (0.03,-3.5)--(0.43,-3.5);
\draw[red,thick, dashdotted] (0.0,-3.7)--(0.4,-3.7);

\node[left] at (-0.3,-.2) {$+1$};
\node[left] at (-0.3,-1.1) {$-1$};
\node[left] at (-0.3,-2) {$-2$};
\node[left] at (-0.3,-2.9) {$-2$};
\node[left] at (-0.3,-3.8) {$-3$};
\end{scope}
\begin{scope}[shift={(4,0)}]
\draw (0.5,0.2)--(0,-1.);
\draw (0,-0.6)--(0.5,-1.8);
\draw (0.5,-1.4)--(0,-2.6);
\draw (0,-2.2)--(0.5,-3.4);
\draw (0.5,-2.8)--(0,-4);
\draw[red,thick, dashdotted] (0.0,-2.7)--(0.5,-2.7);


\node[left] at (-0.3,-.2) {$+1$};
\node[left] at (-0.3,-1.1) {$-1$};
\node[left] at (-0.3,-2) {$-2$};
\node[left] at (-0.3,-2.9) {$-2$};
\node[left] at (-0.3,-3.8) {$-3$};
\end{scope}
\end{tikzpicture}
\caption{Two curve configurations for $Y_L$}
\label{lens52}
\end{figure}Hence, if we rationally blow down $L$ from $X_L=L \cup K_L\cong \mathbb{CP}^2\sharp 6\overline{\mathbb{CP}^2}$, then we get a new rational symplectic $4$-manifold $X_L'\cong \mathbb{CP}^2\sharp 4\overline{\mathbb{CP}^2}$ and the homological data of $K_L$ changes as follows: \begin{align*}
l&\rightarrow l\\
l-e_1-e_2&\rightarrow l-E_1-E_2\\
e_2-e_3&\rightarrow E_2-E_3\\
e_3-e_4&\rightarrow E_3-E_4\\
e_4-e_5-e_{6}&\rightarrow E_1-E_2-E_3
\end{align*}
Here $e_i$ and $E_i$ denote the homology classes of exceptional spheres in $X_L$ and $X_L'$. Note that homological data of $L$ in $X_L$ is given by $e_1-e_2-e_3-e_4-e_5$ and $e_5-e_6$.
Therefore, if we see $X$ as $X_L\sharp 4 \overline{\mathbb{CP}^2}$, we get $X'\cong \mathbb{CP}^2 \sharp 8\overline{\mathbb{CP}^2}$ by rationally blowing down $L$ from $X$ and the homological data of $K_L$ is changed by \begin{align*}
l&\rightarrow l\\
l-e_3-e_5&\rightarrow l-E_1-E_2\\
e_5-e_7&\rightarrow E_2-E_3\\
e_7-e_8&\rightarrow E_3-E_4\\
e_8-e_9-e_{10}&\rightarrow E_1-E_2-E_3,
\end{align*}
where $e_1,e_2, e_4, e_6$ and $E_1, E_2, E_3, E_4$ represent the standard exceptional $2$-spheres in $X'\cong\mathbb{CP}^2\sharp 8\overline{\mathbb{CP}^2}$.
Therefore, the new homological data for concave cap $K$, which give the right-hand curve configuration in Figure~\ref{finalposition}, are as follows:
\begin{align*}
[C_0]&=l\\
[C_1^1]&= l-e_2-e_4-E_1-E_2, \phantom{0} [C_2^1]= e_2-e_6\\
[C_1^2]&= l-e_1-e_2-e_6\\
[C_1^3]&= l-e_1-E_1-E_3, \phantom{0} [C_2^3]= E_3-E_4, \phantom{0} [C_3^3]= E_1-E_2-E_3\\
[C_1^4]&= l-e_1-e_4
\end{align*}
\label{example3}
\end{example}
\begin{remark}
In fact, we investigated all possible curve configurations for a small Seifert $3$-manifold $Y$ with $b\geq 5$ in the proof of main theorem. As a result, we can find all minimal symplectic fillings of $Y$ via corresponding curve configurations. 
For example, a complete list of minimal symplectic fillings of $Y$ in Example~\ref{example1} are given by Example~\ref{example1}, Example~\ref{example2} and Example~\ref{example3}.
\end{remark}\smallskip

\subsection{Proof for Type D}

We start to prove this case for a curve configuration coming from $C_{0,0,0}$. Note that $C_{0,0,0}$ itself is a curve configuration containing $K_{0,0,0}$ corresponding to rational homology ball filling of $\Gamma_{0,0,0}$ in Figure~\ref{Gammapqr}. By repeatedly blowing-up at intersection points between exceptional strands and the first component of each arm, we can get a curve configuration $C_{p,q,r}$ containing $K_{p,q,r}$ corresponding to a rational homology ball filling of $\Gamma_{p,q,r}$ as in Figure~\ref{b4cases}. For notational convenience, we denote three exceptional strands in each $C_{p,q,r}$ by the same $e_i$ with $i\in \mathbb{Z}_3$ so that $e_i$ intersects the last component of $i^{\text{th}}$ arm and the first component of $(i+1)^{\text{th}}$ arm of $K_{p,q,r}$. Let $C_{-1,-1,-1}$ be just right-hand figure of Figure~\ref{startingposition} and $C_{p,q,-1}$ be a configuration of strands obtained from $C_{p,q,0}$ by blowing down $e_2$ in Figure~\ref{b4cases}. Then $C_{p,q,-1}$ contains $K_{p,q,-1}$ which is the proper transform of $K_{p,q,0}$ under the blowing down.

\begin{figure}[h]

\begin{tikzpicture}[scale=.9]
\begin{scope}
\draw (-0.5,0)--(3.5,0);
\node[left] at (-0.5,0){$+1$};
\draw (0,0.5)--(0,-3);
\node[above] at (0,0.5){$-2$};
\draw (1.5,0.5)--(1.5,-2);
\node[above] at (1.5,0.5){$-2$};
\draw (3,0.5)--(3,-3);
\node[above] at (3,.5){$-2$};

\draw (-0.2,-1.2)--(0.95,-2.35);
\node[below] at (0.95,-2.35){$-2$};

\draw[red,thick, dashdotted] (1.7,-1.2)--(0.55,-2.35);
\node[red, above] at (1.025,-1.775){$e_1$};

\draw (1.3,-0.3)--(2.45,-1.45);
\node[below] at (2.45,-1.45){$-2$};
\draw[red,thick, dashdotted] (3.2,-0.3)--(2.05,-1.45);
\node[red, above] at (2.525,-0.875){$e_2$};

\draw[red,thick, dashdotted] (-0.2,-2.1)--(1.7,-4);
\node[red, below] at (0.64,-3.05){$e_3$};
\draw (3.2,-2)--(1.3,-4);
\node[below] at (2.35,-3){$-2$};

\end{scope}

\begin{scope}[shift={(6,0)}]

\draw (-0.5,0)--(3.5,0);
\node[left] at (-0.5,0){$+1$};
\draw (0,0.5)--(0,-3);
\node[above] at (0,0.5){$-(r+2)$};
\draw (1.5,0.5)--(1.5,-2);
\node[above] at (1.5,0.5){$-(p+2)$};
\draw (3,0.5)--(3,-3);
\node[above] at (3,.5){$-(q+2)$};

\draw (-0.1,-1.4)--(0.3,-1.4);
\draw (0.2,-1.3)--(0.2,-1.7);
\node at (0.45,-1.65) {$\ddots$};

\draw (0.45,-1.95)--(0.85,-1.95);
\draw (0.75,-1.85)--(0.75,-2.25);

	\draw [decorate,decoration={brace,amplitude=5pt},yshift=10pt,xshift=5pt]
	(0,-1.5) -- (0.75,-2.25) node [black,midway,xshift=13pt, yshift=10pt] 
	{$p+1$};

\draw[red,thick, dashdotted] (1.7,-1.2)--(0.55,-2.35);
\node[red, below right] at (1.025,-1.775){$e_1$};


\draw (1.4,-0.5)--(1.8,-0.5);
\draw (1.7,-0.4)--(1.7,-0.8);
\node at (1.95,-0.75) {$\ddots$};

\draw (1.95,-1.05)--(2.35,-1.05);
\draw (2.25,-0.95)--(2.25,-1.35);

	\draw [decorate,decoration={brace,amplitude=5pt},yshift=10pt,xshift=5pt]
	(1.5,-0.6) -- (2.25,-1.35) node [black,midway,xshift=13pt, yshift=9pt] 
	{$q+1$};

\draw[red,thick, dashdotted] (3.2,-0.3)--(2.05,-1.45);
\node[red, below right] at (2.525,-0.875){$e_2$};

\draw[red,thick, dashdotted] (-0.2,-2.1)--(1.7,-4);
\draw(1.55,-3.35)--(1.95,-3.35);
\draw(1.65,-3.65)--(1.65,-3.25);
\draw(1.35,-3.55)--(1.75,-3.55);
\draw(1.45,-3.45)--(1.45,-3.85);

\node[red, below] at (0.64,-3.05){$e_3$};
\draw(3.1,-2.2)--(2.7,-2.2);
\draw(2.8,-2.1)--(2.8,-2.5);
\draw(2.9,-2.4)--(2.5,-2.4);
\draw(2.6,-2.3)--(2.6,-2.7);

	\draw [decorate,decoration={brace,mirror,amplitude=5pt},yshift=-5pt,xshift=5pt]
	(1.65,-3.65) -- (2.9,-2.4) node [black,midway,xshift=10pt, yshift=-10pt] 
	{$r+1$};

\filldraw (2.25,-2.95) circle(0.5pt);
\filldraw (2.4,-2.8) circle(0.5pt);
\filldraw (2.1,-3.1) circle(0.5pt);

\end{scope}
\end{tikzpicture}
\caption{Curve configurations $C_{0,0,0}$ and $C_{p,q,r}$}
\label{b4cases}

\end{figure}

\begin{prop}
For a curve configuration $C$ coming from $C_{0,0,0}$, there is a curve configuration $C_{a,b,c}$ containing $K_{a,b,c}$ with $a,b,c \geq -1$ such that
\begin{enumerate}[(i)]
\item there is a sequence of blowing-ups from $C_{a,b,c}$ to $C$,
\item there is either no blowing-up at $e_i$ or blowing-ups at both intersection points on $e_i$ during the sequence of blowing-ups,
\item there is no blowing-up at intersection points of $K_{a,b,c}$.
\end{enumerate}
\label{propb=4}
\end{prop}

\begin{proof}
Since there are no strands with degree $\leq -2$ in $C$ except for irreducible components of $K$, each irreducible component of $K_{0,0,0}$ in $C_{0,0,0}$ should become an irreducible component of $K$ under blowing-ups from $C_{0,0,0}$ to $C$.
Hence, in order to get $C$ from $C_{0,0,0}$ by blowing-ups $e_i$, we should blow up at either two intersection points of $e_i$ with arms or an intersection point of $e_i$ with $(i+1)^{\text{th}}$-arm only. Note that we get $C_{p,q,r}$ containing $K_{p,q,r}$ by blowing-ups the latter case repeatedly.
Hence, by rearranging the order of blowing-ups from $C_{0,0,0}$ to a curve configuration $C$, we may assume that $C$ is obtained from $C_{p,q,r}$ with $p,q,r\geq 0$ and there are no more blowing-ups at an intersection point of $e_i$ with $(i+1)^{\text{th}}$-arm only. Since the configuration $C_{p,q,r}$ clearly satisfies Conditions (i) and (ii), we are done if there is no blowing-up at intersection points of $K_{p,q,r}$ in $C_{p,q,r}$.

If there are blowing-ups at intersection points of $K_{p,q,r}$ in $C_{p,q,r}$ to $C$, then we will find another $C_{a,b,c}$ with $a \geq p$, $b\geq q$ and $c\geq r$ satisfying Conditions (i)-(iii) as follows.
Let $x_i$ be the first intersection point in the $i^{\text{th}}$ arm of $K_{p,q,r}$ among the intersection points to be blown up and $C'$ be a configuration of strands obtained by blowing up at $x_i$ $(1\leq i \leq 3)$.
For notational convenience, we denote exceptional strands in $C_{p,q,r}$ and the proper transform of $e_i$ in $C'$ by the same $e_i$.
There is a unique $(-1)$ exceptional strand in each $i^{\text{th}}$ arm of $K'$ in $C'$, which is the ${n_i}^{\text{th}}$-component of $i^{\text{th}}$-arm with $n_i \geq 2$, where $K'$ is the total transform of $K_{p,q,r}$.
Then we claim that there is a sequence of blowing-ups from $C_{n_1-3,n_2-3,n_3-3}$ to $C'$: We blow up two intersection points of $e_i$ simultaneously, and then we blow up at intersection point between exceptional $(-1)$ strand and the first component of $(i+1)^{\text{th}}$ arm consecutively to get $C'$  (For example, see Figure~\ref{detail4.4} for the first arm).  
\begin{figure}[h]

\begin{tikzpicture}[scale=0.7]
\draw[very thick, ->, >=stealth] (3.5,-1.75)--(4.5,-1.75);
\draw[very thick, ->, >=stealth] (9.25,-1.75)--(10.75,-1.75);

\begin{scope}[shift={(0,0)}]
\draw (3.,0.1)--(2.5,-0.9);
\draw (1.5,0.1)--(1,-0.9);
\draw (1,-0.3)--(1.5,-1.2);

\draw[red,thick, dashdotted] (3.,-0.2)--(1,-2.2);

\node at (1.25, -1.25) {$\vdots$};

\draw (1,-1.6)--(1.5,-2.6);

\draw [decorate,decoration={brace,mirror, amplitude=5pt},xshift=-10pt,yshift=0pt]
	(1,0) -- (1,-2.4) node [black,midway,xshift=-25pt] 
	{$(n_1-1)$};

\node[] at (3, 0.5) {$-(n_{1}-1)$};
\node[red] at (2.35, -1.6) {$e_1$};
\filldraw[red] (1.2,1.2-3.2) circle (1.75pt);
\filldraw[red] (2.7,2.7-3.2) circle (1.75pt);


\end{scope}

\begin{scope}[shift={(5.5,0.5)}]
\draw (3,0.)--(2.8,-1);
\draw (1.5,0.1)--(1,-0.9);
\draw (1,-0.3)--(1.5,-1.2);

\draw[red,thick, dashdotted] (1.25,-3.8)--(3.2,.1);
\node at (1.25, -1.25) {$\vdots$};

\draw (1,-1.6)--(1.5,-2.6);
\draw (1.5,-2.2)--(1,-3.2);
\draw (1,-2.8)--(1.5,-3.8);


\draw [decorate,decoration={brace,mirror, amplitude=5pt},xshift=-10pt,yshift=0pt]
	(1,0) -- (1,-2.1) node [black,midway,xshift=-25pt] 
	{$(n_1-1)$};

\node[] at (3, 0.5) {$-n_{1}$};
\node[] at (0.5, -2.6) {$-1$};
\node[] at (.5, -3.5) {$-3$};
\filldraw[red] (2.9,-.5) circle (1.75pt);


\end{scope}

\begin{scope}[shift={(12,1.5)}]
\draw (3,0.)--(2.5,-1);
\draw (1.5,0.1)--(1,-0.9);
\draw (1,-0.3)--(1.5,-1.2);

\node at (1.25, -1.25) {$\vdots$};
\node at (1.25, -4.35) {$\vdots$};

\draw (1,-1.6)--(1.5,-2.6);
\draw (1.5,-2.2)--(1,-3.2);
\draw (1,-2.8)--(1.5,-3.8);
\draw (1.5,-3.2)--(1.,-4.2);
\draw (1.5,-4.6)--(1,-5.6);

\draw[red,thick, dashdotted] (1,-5.4) to[in=260] (2.5,-3.5) to[out=80,in=265](2.84,0);

\draw [decorate,decoration={brace,mirror, amplitude=5pt},xshift=-10pt,yshift=0pt]
	(1,0) -- (1,-2.1) node [black,midway,xshift=-25pt] 
	{$(n_1-1)$};
\draw [decorate,decoration={brace,mirror, amplitude=5pt},xshift=-10pt,yshift=0pt]
	(1,-4) -- (1,-5.4) node [black,midway,xshift=-35pt] 
	{$(p-n_1)$};

\node[] at (3, 0.5) {$-p$};
\node[] at (0.5, -2.6) {$-1$};
\node[] at (.5, -3.5) {$-3$};


\end{scope}

\end{tikzpicture}

\caption{Part of blowing-ups from $C_{n_1-3,n_2-3,n_3-3}$ to $C'$}
\label{detail4.4} 
\end{figure}
We see from the construction that a configuration $C_{n_1-3,n_2-3,n_3-3}$ satisfies Conditions (i) and (ii). Moreover, since $x_i$ is uppermost point among the intersection points to be blown up, there is no blowing-up at intersection points of $K_{n_1-3,n_2-3,n_3-3}$ during the blowing-ups from $C_{n_1-3,n_2-3,n_3-3}$ to $C$. Therefore $C_{n_1-3,n_2-3,n_3-3}$ is a desired curve configuration $C_{a,b,c}$.
\end{proof}
Since $K_{a,b,c} \leq K$ (guaranteed by Condition (ii) in Proposition~\ref{propb=4}), there is a curve configuration $C_1$ of $Y$ obtained from $C_{a,b,c}$ by standard blowing-ups. If one of $a,b,c$ is $-1$, then the curve configuration $C_1$ is of type B or type C, so that there is a sequence of rational blowdowns from $\widetilde{M}$ to the minimal symplectic filling $W_1$ corresponding to $C_1$. If all $a,b,c\geq 0$, then $W_1$ is obtained from $\widetilde{M}$ by replacing $\Gamma_{a,b,c}$ with its rational homology ball filling by Lemma~\ref{fundamentallem2}.
On the other hand, Conditions (ii) and (iii) in Proposition~\ref{propb=4} guarantee that there is a sequence of rational blowdowns from $W_1$ to the minimal symplectic filling $W$ corresponding to $C$ by using Lemma~\ref{fundamentallem} or Lemma~\ref{fundamentallem3} repeatedly.\\

We end this section by giving an example of minimal symplectic fillings involving $3$-legged rational blowdown surgery. 

\begin{example}
Let $Y$ be a small Seifert $3$-manifold whose minimal resolution graph $\Gamma$ and concave cap $K$ are given by Figure~\ref{b4example}. 
We consider two minimal symplectic fillings $W_1, W_2$ of $Y$ whose curve configurations are given by Figure~\ref{b4example1} and Figure~\ref{b4example2}. 
Note that the curve configuration in Figure~\ref{b4example1} is obtained from $C_{0,0,0}$ by standard blowing-ups. Thus, as in the proof, $W_1$ is obtained from the minimal resolution by rationally blowing down $\Gamma_{0,0,0}$. 
Let us denote $v_0$ by a central vertex and $v_i^j$ by $i^{\text{th}}$-vertex of the $j^{\text{th}}$-arm in $\Gamma$.
Then, $v_0, v_1^1, v_1^2$ and $v_1^3+v_2^3$ give a symplectic embedding of $\Gamma_{0,0,0}$ to the minimal resolution. A computation similar to that of Example~\ref{example3} shows that there is a symplectic embedding $L$ of \begin{tikzpicture}[scale=0.7]
\node[bullet] at (2,0){};
\node[bullet] at (3,0){};

\node[above] at (2,0){$-5$};
\node[above] at (3,0){$-2$};

\draw (2,0)--(3,0);
\end{tikzpicture} to $W_1$ and $W_2$ is obtained from $W_1$ by rationally blowing down $L$.

\begin{figure}[h]
\begin{tikzpicture}[xscale=0.75, yscale=0.55]
\begin{scope}

\node[bullet] at (3,0){};
\node[bullet] at (4,0){};
\node[bullet] at (4,-1){};
\node[bullet] at (4,-2){};
\node[bullet] at (4,-3){};

\node[bullet] at (5,0){};
\node[bullet] at (6,0){};

\node[above] at (3,0){$-3$};
\node[above] at (4,0){$-4$};
\node[above] at (5,0){$-2$};
\node[above] at (6,0){$-3$};

\node[left] at (4,-1){$-3$};
\node[left] at (4,-2){$-4$};
\node[left] at (4,-3){$-2$};

\draw (3,0)--(6,0);
\draw (4,0)--(4,-3);
\end{scope}
\begin{scope}[shift={(8,0)}]
\draw(0,0)--(3,0);
\draw (0.5,0.2)--(0,-1.);
\draw (1.5,0.2)--(1,-1.);
\draw (2.5,0.2)--(2,-1.);

\draw (0,-0.6)--(0.5,-1.8);
\draw (1,-0.6)--(1.5,-1.8);
\draw (2,-0.6)--(2.5,-1.8);

\draw (1.5, -1.4)--(1,-2.6);

\draw (1,-2.2)--(1.5,-3.4);

\node[left] at (0,0) {$+1$};
\node[right] at (0.25, -0.25) {$-2$};
\node[right] at (1.25, -0.25) {$-2$};
\node[right] at (2.25, -0.25) {$-3$};

\node[right] at (0.25, -1) {$-2$};
\node[right] at (1.25, -1) {$-3$};
\node[right] at (1.25, -2.25) {$-2$};
\node[right] at (1.25, -3) {$-3$};

\node[right] at (2.25, -1) {$-2$};

\end{scope}
\end{tikzpicture}

\caption{Plumbing graph $\Gamma$ and its concave cap $K$}
\label{b4example}
\end{figure}

\begin{figure}[h]
\begin{tikzpicture}[scale=0.65]
\begin{scope}
\draw (-0.5,0)--(3.5,0);
\node[left] at (-0.5,0){$+1$};
\draw (0,0.5)--(0,-3);
\node[above] at (0,0.5){$-2$};
\draw (1.5,0.5)--(1.5,-2);
\node[above] at (1.5,0.5){$-2$};
\draw (3,0.5)--(3,-3);
\node[above] at (3,.5){$-2$};

\draw (-0.2,-1.2)--(0.95,-2.35);
\node[below] at (0.95,-2.35){$-2$};

\draw[red,thick, dashdotted] (1.7,-1.2)--(0.55,-2.35);
\node[red, above] at (1.025,-1.775){$e_1$};

\draw (1.3,-0.3)--(2.45,-1.45);
\node[below] at (2.45,-1.45){$-2$};
\draw[red,thick, dashdotted] (3.2,-0.3)--(2.05,-1.45);
\node[red, above] at (2.525,-0.875){$e_2$};

\draw[red,thick, dashdotted] (-0.2,-2.1)--(1.7,-4);
\node[red, below] at (0.64,-3.05){$e_3$};
\draw (3.2,-2)--(1.3,-4);
\node[below] at (2.35,-3){$-2$};
\draw[very thick, ->, >=stealth] (4,-1.25)--(5,-1.25);

\end{scope}

\begin{scope}[shift={(6,0)}]

\draw (-0.5,0)--(3.5,0);
\node[left] at (-0.5,0){$+1$};
\draw (0,0.5)--(0,-3);
\draw (1.5,0.5)--(1.5,-2);
\draw (3,0.5)--(3,-3);
\draw[red,thick, dashdotted] (2.75,-1.1)--(3.25,-1.1);
\draw (-0.2,-1.2)--(0.95,-2.35);

\draw[red,thick, dashdotted] (1.7,-1.2)--(0.55,-2.35);
\draw (1.3,-0.3)--(2.45,-1.45)--(2.55,-1.55);
\draw (2.55,-1.35)--(2,-1.9);
\draw (2,-1.7)--(2.65,-2.35);
\draw[red,thick, dashdotted] (2.5,-1.8)--(2.1,-2.2);
\draw[red,thick, dashdotted] (2.6,-1.9)--(2.2,-2.3);

\draw[red,thick, dashdotted] (3.2,-0.3)--(2.05,-1.45);

\draw[red,thick, dashdotted] (-0.2,-2.1)--(1.7,-4);
\draw (3.2,-2)--(1.3,-4);

\end{scope}
\end{tikzpicture}
\caption{Curve configuration for $W_1$}
\label{b4example1}
\end{figure}
\begin{figure}[h]
\begin{tikzpicture}[scale=0.65]
\begin{scope}
\draw (-0.5,0)--(3.5,0);
\node[left] at (-0.5,0){$+1$};
\draw (0,0.5)--(0,-3);
\node[above] at (0,0.5){$-2$};
\draw (1.5,0.5)--(1.5,-2);
\node[above] at (1.5,0.5){$-2$};
\draw (3,0.5)--(3,-3);
\node[above] at (3,.5){$-2$};

\draw (-0.2,-1.2)--(0.95,-2.35);
\node[below] at (0.95,-2.35){$-2$};

\draw[red,thick, dashdotted] (1.7,-1.2)--(0.55,-2.35);
\node[red, above] at (1.025,-1.775){$e_1$};

\draw (1.3,-0.3)--(2.45,-1.45);
\node[below] at (2.45,-1.45){$-2$};
\draw[red,thick, dashdotted] (3.2,-0.3)--(2.05,-1.45);
\node[red, above] at (2.525,-0.875){$e_2$};
\filldraw (3,-0.5) circle (1pt);
\filldraw (2.25,-1.25) circle (1pt);

\draw[red,thick, dashdotted] (-0.2,-2.1)--(1.7,-4);
\node[red, below] at (0.64,-3.05){$e_3$};
\draw (3.2,-2)--(1.3,-4);
\node[below] at (2.35,-3){$-2$};
\draw[very thick, ->, >=stealth] (4,-1.25)--(5,-1.25);

\end{scope}

\begin{scope}[shift={(6,0)}]

\draw (-0.5,0)--(3.5,0);
\node[left] at (-0.5,0){$+1$};
\draw (0,0.5)--(0,-3);
\draw (1.5,0.5)--(1.5,-2);
\draw (3,0.5)--(3,-3);
\draw (-0.2,-1.2)--(0.95,-2.35);

\draw[red,thick, dashdotted] (1.7,-1.2)--(0.55,-2.35);

\draw (1.3,-0.3)--(1.9,-0.9);
\draw[red,thick, dashdotted] (1.9, -0.5)--(2.1,-0.7);
\draw (1.7,-0.9)--(2.3,-0.3);
\draw (2.1,-0.3)--(2.7,-0.9);
\draw[red,thick, dashdotted] (2.5,-0.9)--(3.1,-0.3);

\draw[red,thick, dashdotted] (-0.2,-2.1)--(1.7,-4);
\draw (3.2,-2)--(1.3,-4);

\end{scope}
\end{tikzpicture}
\caption{Curve configuration for $W_2$}
\label{b4example2}
\end{figure}
\end{example}

\medskip


\providecommand{\bysame}{\leavevmode\hbox to3em{\hrulefill}\thinspace}


\begin{thebibliography}{Gom16b}



\bibitem[BOn]{BOn}
M.~Bhupal and K.~Ono, \emph{Symplectic fillings of links of quotient surface singularities}, Nagoya Math. J. \textbf{207} (2012), 1--45.
  
\bibitem[BOz]{BOz}
M.~Bhupal and B.~Ozbagci, \emph{Symplectic fillings of lens spaces as Lefschetz fibrations}, J. Eur. Math. Soc. \textbf{18} (2016), no.~7, 1515--1535.

\bibitem[BS]{BS}
M.~Bhupal and A.~Stipsicz, \emph{Weighted homogeneous singularities and rational homology disk smoothing}, Amer. J. Math. \textbf{133} (2011), 1259--1297. 

\bibitem[CP]{CP}
H.~Choi and J.~Park, \emph{A Lefschetz fibration structure on minimal symplectic fillings of a quotient surface singularity}, arXiv:1802.03304.

\bibitem[FS]{FS} 
R.~Fintushel and R.~Stern, \emph{Rational blowdowns of smooth $4$-manifolds}, J. Differential Geom. \textbf{46} (1997), no.~2, 181--235.

\bibitem[GS1]{GS1}
D.~Gay and A.~Stipsicz. \emph{Symplectic rational blow-down along Seifert fibered $3$-manifolds}, Int. Math. Res. Not. IMRN 
2007, no.~22, Art. ID rmn084, 20 pp.

\bibitem[GS2]{GS2}
D.~Gay and A.~Stipsicz. \emph{Symplectic surgeries and normal surface singularities}, Algebr. Geom. Topol. \textbf{9} (2009), no.~4, 2203--2223.


\bibitem[Lis]{Lis}
P.~Lisca, \emph{On symplectic fillings of lens spaces}, Trans. Amer. Math. Soc. \textbf{360} (2008), no.~2, 765--799.

\bibitem[McD]{McD}
D.~McDuff, \emph{The structure of rational and ruled symplectic $4$-manifolds}, J. Amer. Math. Soc. \textbf{3} (1990), no.~3, 679--712.

\bibitem[Orl]{Orl}
P.~Orlik, \emph{Seifert Manifolds}, Lecture Notes in Math., vol. 291. Springer-Verlag, 1972.
 
\bibitem[OW]{OW}
P.~Orlik, P.~Wagreich, \emph{Isolated singularities of algebraic surface with $\mathbb{C}^*$ action}, Ann. of Math. (2) \textbf{93} (1971) 205--228.

\bibitem[PS]{PS}
H.~Park, A.~Stipsicz, \emph{Smoothings of singularities and symplectic surgery}, J. Symplectic Geom. \textbf{12} (2014), no.~3, 585--597

\bibitem[Par]{Par}
J.~Park, \emph{Seiberg-Witten invariants of generalised rational blow-downs}, Bull. Austral. Math. Soc. \textbf{56} (1997), no. ~3, 363--384.

\bibitem[Pin]{Pin}
H.~Pinkham, \emph{Normal surface singularities with $\mathbb{C}^*$ action}, Math. Ann. \textbf{227} (1977), no.~2, 183--193.

\bibitem[PPSU]{PPSU}
H.~Park, J.~Park, D.~Shin and G.~Urz{\'u}a, \emph{Milnor fibers and symplectic fillings of quotient surface singularities}, Adv. Math. \textbf{329} (2018), 1156--1230.

\bibitem[SSW]{SSW}
A.~Stipsicz, Z.~Szab{\'o} and J.~Wahl, \emph{Rational blowdowns and smoothings of surface singularities}, J. Topol. \textbf{1} (2008), 477--517.

\bibitem[Sta1]{Sta1}
L.~Starkston, \emph{Symplectic fillings of Seifert fibered spaces}, Trans. Amer. Math. Soc. \textbf{367} (2015), no.~8, 5971--6616.

\bibitem[Sta2]{Sta2}
L.~Starkston, \emph{Comparing star surgery to rational blow-down}, J. Gok\"ova Geom. Topol. GGT. \textbf{10} (2016), 60--79.

\bibitem[Sym1]{Sym1}
M.~Symington, \emph{Symplectic rational blowdowns}, J. Diff. Geom. \textbf{50} (1998), no.~3, 505--518.

\bibitem[Sym2]{Sym2}
M.~Symington, \emph{Generalized symplectic rational blowdowns}, Algebr. Geom. Topol. \textbf{1} (2001), 503--518.


  
  

 
 
  


\end{thebibliography}
\end{document}